\newtheorem{lemma}{Lemma}
\newtheorem{proposition}[lemma]{Proposition}
\newtheorem{theorem}[lemma]{Theorem}
\newtheorem{corollary}[lemma]{Corollary}
\newtheorem{exampl}[lemma]{Example}
\newenvironment{example}{\begin{exampl}\upshape}{\hfill$\Box$\end{exampl}}
\newcommand{\C}{{\bf C}}
\newcommand{\R}{{\bf R}}
\newcommand{\Z}{{\bf Z}}
\newcommand{\rme}{{\rm e}}
\newcommand{\rmd}{{\rm d}}
\newcommand{\cA}{{\cal A}}
\newcommand{\cB}{{\cal B}}
\newcommand{\cC}{{\cal C}}
\newcommand{\cD}{{\cal D}}
\newcommand{\cE}{{\cal E}}
\newcommand{\cF}{{\cal F}}
\newcommand{\cG}{{\cal G}}
\newcommand{\cH}{{\cal H}}
\newcommand{\cJ}{{\cal J}}
\newcommand{\cK}{{\cal K}}
\newcommand{\cL}{{\cal L}}
\newcommand{\cM}{{\cal M}}
\newcommand{\cS}{{\cal S}}
\newcommand{\cU}{{\cal U}}
\newcommand{\sig}{\sigma}
\newcommand{\alp}{\alpha}
\newcommand{\bet}{\beta}
\newcommand{\del}{\delta}
\newcommand{\eps}{\varepsilon}
\newcommand{\lap}{{\Delta}}
\newcommand{\grad}{\nabla}
\newcommand{\Dom}{{\rm Dom}}
\newcommand{\Ran}{{\rm Ran}}
\newcommand{\norm}{\Vert}
\newcommand{\Schrodinger}{Schr\"odinger }
\newcommand{\pr}{\prime}
\newcommand{\half}{\textstyle{\frac{1}{2}}}
\newcommand{\eqref}[1]{(\ref{#1})}
\newenvironment{proof}{\textbf{Proof}}{\hfill$\Box$}
\newenvironment{proofname}[1]{\textbf{#1}}{$\Box$}
\title{C*-algebras associated with\\ some second order differential
  operators }
\author{E B Davies\thanks{Department of Mathematics, King's College
    London} \and V Georgescu\thanks{CNRS and University of
    Cergy-Pontoise}}
\date{}
\begin{document}
\maketitle
\begin{abstract}
We compare two C*-algebras that have been used to study the essential spectrum. This is done by considering a simple second order elliptic differential operator acting in $L^2(\R^N)$, which is affiliated with one or both of the algebras depending on the behaviour of the coefficients.
\end{abstract}

MSC subject classification: 46Lxx; 47Bxx; 81Q10; 35P05.

Key words: spectral analysis, essential spectrum, C*-algebra, elliptic differential operator.

\section{Introduction}\label{intro}

In the last few years several papers have been written showing that
many of the operators arising in quantum theory lie in one of two
C*-algebras, which we call $\cD$ and $\cE$, each of which contains a
wealth of closed two-sided ideals. The ideals are defined by
considering the asymptotic behaviour of the operators concerned in
various directions at infinity. The ideals allow one to subdivide
the essential spectrum of operators in either algebra into various
geometrically specified parts. The key papers in this context
include \cite{EBD2, Georgescu1} and other papers cited there.

\label{cr0}
This paper arose from exchanges between the authors about which of
the two C*-algebras was better suited to studying the spectrum at
infinity of various operators, particularly elliptic differential
operators.  The contents flesh out their eventual conclusion: the
smaller algebra, $\cE$, suffices for a wide range of uniformly
elliptic operators, but the larger algebra, $\cD$, is needed in a
number of more singular applications.

In this paper we compare $\cD$ and $\cE$ under the assumption that
they are contained in $\cL(\cH)$ where $\cH=L^2(\R^N, \rmd^Nx)$. The
algebra $\cD$ is defined to be the set of all $A\in\cL(\cH)$ such
that
\begin{equation}
\lim_{k\to 0} \norm V_kAV_{-k}-A\norm =0\label{Vproperty}
\end{equation}
where $V_kf(x)=\rme^{ik\cdot x}f(x)$ and $k\in\R^N$. Equivalently $V_k=\rme^{ik\cdot Q}$ where $Q$ is the position operator. The algebra
$\cE$ is the set of all $A\in\cD$ satisfying the further conditions
\begin{equation}
\lim_{s\to 0} \norm U_sA-A\norm =0, \hspace{2em}\lim_{s\to 0} \norm
U_sA^\ast-A^\ast\norm =0.\label{Uproperty}
\end{equation}
where $U_sf(x)=f(x+s)$ and $s\in\R^N$. Equivalently
$U_s=\rme^{is\cdot P}$ where $P$ is the momentum operator. One may
also write $\cE=\cC\rtimes G$, which is the C*-algebra associated
with the action of the group $G$ of all space translations $U_s$ on
the algebra $\cC$ of all uniformly continuous bounded functions on
$\R^N$; see \cite{Wi} for details. Note that $\cE$ contains every operator of the form $f(Q)g(P)$ where $f\in \cC$ and $g\in C_0(\R^N)$.

The algebras $\cD$ and $\cE$ play a remarkable role in the
description of the essential spectrum of partial differential
operators, cf. \cite{EBD2,Georgescu1}. For example, consider the
following natural question: under what conditions is the essential
spectrum $\sigma_{\mathrm ess}(H)$ of a self-adjoint operator $H$ on
$L^2(\R^N)$ determined by its `asymptotic operators', obtained
as limits at infinity of its translates? More precisely, we say that
$H_\varkappa$ is an \emph{asymptotic operator} of $H$ if there is a
sequence $c_n\in \R^{N}$ with $|c_n|\to\infty$ such that
$U_{c_n}HU_{c_n}^*$ converges in strong resolvent sense to
$H_\varkappa$. We refer to \cite{Georgescu1,LS} for recent results
on this question and for other references and note the following
simple answer given in \cite{Georgescu1}: if $H$ is affiliated with
$\cE$ then
\begin{equation}
  \sigma_{\mathrm
    ess}(H)=\overline{\bigcup}_\varkappa\sigma(H_\varkappa),
\label{essspecunion}
\end{equation}
where $\overline{\bigcup}$ means the closure of the union.

\label{cr1}
Note that if $H$ is a self-adjoint operator on some Hilbert space
$\cH$ and $\cA$ is a C*-algebra of operators on $\cH$ then $H$ is
said to be \emph{affiliated to} $\cA$ if $(H-z)^{-1}\in\cA$ for some
complex number $z\not\in\sigma(H)$ (then this clearly holds for all
such $z$).

Although one may also use $\cD$ to study the essential spectrum, the
identity (\ref{essspecunion}) need not hold for operators in
$\cD$. For example, if $\phi$ lies in the space $\cC_0$ of
continuous functions vanishing at infinity, then $\phi(Q)\in\cD$ and
the only asymptotic limit of $\phi(Q)$ is $0$, but the essential
spectrum of $\phi(Q)$ is the closure of the range of $\phi$. We shall
see that similar phenomena occur for differential operators; see
Theorem~\ref{main3-add-bis} and Section~\ref{extras}. In this
context it is relevant that if $\phi\in\cC$ then $\phi(Q)\in \cD$
but $\phi(Q)\notin \cE$ unless $\phi=0$. In particular the identity
operator does not lie in $\cE$.

It might be thought that the failure of (\ref{essspecunion}) for
$\cD$ rules out the use of $\cD$ to investigate the essential
spectrum of operators, but this is not the case. The paper
\cite{EBD2} associates a closed two-sided ideal $\cJ_S$ with every
non-empty open subset $S$ in $\R^N$ (or the relevant underlying
space) and then defines $\sig_S(A)$ for every $A\in\cD$ to be the
spectrum of the image of $A$ in the quotient algebra $\cD/\cJ_s$. It
is shown in \cite[Theorem~18]{EBD2} that $\cJ_S$ contains all
compact operators on $L^2(\R^N,\rmd^Nx)$, but the same proof implies
that it contains $\phi(Q)$ for all $\phi\in \cC_0$. Indeed $\cJ_S$
contains all operators in $\cD$ that have compact support in $\R^N$
on the left and the right in a certain natural sense; see
\cite[Lemma~12]{EBD2}. If $A\in\cD$ then $\sig_S(A)$ captures that
part of the essential spectrum of $A$ that is associated with a
`direction' at infinity determined by the set $S$. See also Theorem~\ref{bddideal} and the comments
after it. \label{cr2}

This paper aims to clarify the role that the two algebras play in
connection with certain second order elliptic differential
operators; the methods can be adapted to higher order operators
under suitable assumptions. The simplest results that we obtain are
Theorems 1, 2 and 3 below. We also present some further theorems
that involve variations of the technical assumptions needed to treat
more general differential operators; see particularly
Section~\ref{extras}.

We start by considering the Friedrichs extension of a non-negative
symmetric operator $H_0$ defined on $C_c^\infty(\R^N)$ by
\begin{equation}
(H_0f)(x)=-\grad \cdot (a\grad f)(x)=-a(x)\lap f(x)- \grad a(x)\cdot\grad
f(x).\label{Hoperator}
\end{equation}
More precisely let $a:\R^N\to (0,\infty)$ be a $C^1$ function and
let $Q_0$ denote the quadratic form associated with $H_0$. It is defined on
$C_c^\infty(\R^N)$ by
\[
Q_0(f)=\langle H_0f,f\rangle=\int_{\R^N} a(x)|\grad f(x)|^2 \, \rmd^N x.
\]
The form $Q_0$ is closable in $L^2(\R^N)$ and its closure $Q$ is
associated with a non-negative self-adjoint operator $H$, called the
Friedrichs extension of $H_0$. And $H$ is affiliated with
$\cD$ (or with $\cE$) if $(H+\alp I)^{-1}$ lies in the relevant
algebra for some, or equivalently all, $\alp>0$. \label{cr3}

Under the assumptions above our main theorems are as follows.

\begin{theorem}\label{main1}
If there exists a constant $c>0$ such that if $0<a(x)\leq c$ for all
$x\in \R^N$ then $A=(H+\alp I)^{-1}$ satisfies (\ref{Vproperty}) for all $\alp>0$, so $H$ is affiliated with $\cD$.
\end{theorem}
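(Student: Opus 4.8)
The plan is to reduce property (\ref{Vproperty}) to a norm--resolvent convergence statement and then establish the latter by working entirely at the level of quadratic forms, since the difference $H-V_kHV_{-k}$ is a genuinely unbounded first-order operator and cannot be treated as a bounded perturbation directly. First, because $V_k$ is unitary with $V_k^*=V_{-k}$, one has $V_kAV_{-k}=\big(V_k(H+\alpha)V_{-k}\big)^{-1}=(H_k+\alpha)^{-1}$, where $H_k:=V_kHV_{-k}$ is the self-adjoint operator associated with the form $Q_k(f):=Q(V_{-k}f)$. Using $V_kPV_{-k}=P-k$ (equivalently, replacing $\nabla$ by $\nabla-ik$), a direct computation gives
\[
Q_k(f)=\int_{\R^N} a(x)\,|(\nabla-ik)f(x)|^2\,\rmd^Nx,
\]
with the same form domain $\Dom(Q)$, since multiplication by the smooth function $\rme^{\pm ik\cdot x}$, whose derivatives are bounded, preserves $\Dom(Q)$. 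Thus it suffices to prove $\norm(H_k+\alpha)^{-1}-(H+\alpha)^{-1}\norm\to 0$ as $k\to0$.

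Next I estimate the associated sesquilinear form difference $b_k(f,g):=Q_k(f,g)-Q(f,g)$. Expanding the integrand, it splits into one term of order $|k|^2$, namely $|k|^2\int a\,f\bar g$, and two cross terms of order $|k|$ of the form $\pm\, ik\cdot\int a(\cdots)$ pairing $\nabla f$ or $\nabla g$ against $g$ or $f$. Writing $T=(H+\alpha)^{1/2}$, so that $\norm Tf\norm^2=Q(f)+\alpha\norm f\norm^2$, I bound each term using only the hypothesis $a\le c$: the quadratic term by $|k|^2c\norm f\norm\,\norm g\norm\le|k|^2c\alpha^{-1}\norm Tf\norm\,\norm Tg\norm$, and each cross term by Cauchy--Schwarz as $|k|\int a^{1/2}|f|\cdot a^{1/2}|\nabla g|\le|k|c^{1/2}\norm f\norm\,Q(g)^{1/2}\le|k|c^{1/2}\alpha^{-1/2}\norm Tf\norm\,\norm Tg\norm$. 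This yields
\[
|b_k(f,g)|\le\eps_k\,\norm Tf\norm\,\norm Tg\norm,\qquad
\eps_k:=2|k|c^{1/2}\alpha^{-1/2}+|k|^2c\alpha^{-1},
\]
where $\eps_k\to0$ as $k\to0$. Note that the upper bound $a\le c$ is used precisely to control the zeroth-order and cross terms relative to the form of $H+\alpha$; no lower bound on $a$ is required, which is consistent with the hypotheses.

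Finally I convert this relative form bound into a norm bound on the resolvent difference. The estimate says that $(u,v)\mapsto b_k(T^{-1}u,T^{-1}v)$ is a bounded form on $\cH\times\cH$ of norm at most $\eps_k$, so there is $S_k\in\cL(\cH)$ with $\norm S_k\norm\le\eps_k$ and $b_k(f,g)=\la S_kTf,Tg\ra$. Hence the closed form of $H_k+\alpha$ is $\la(I+S_k)Tf,Tg\ra$, and a short verification (valid once $\norm S_k\norm<1$) shows $(H_k+\alpha)^{-1}=T^{-1}(I+S_k)^{-1}T^{-1}$. Subtracting $(H+\alpha)^{-1}=T^{-2}$ gives
\[
(H_k+\alpha)^{-1}-(H+\alpha)^{-1}=T^{-1}\big[(I+S_k)^{-1}-I\big]T^{-1},
\]
whose norm is at most $\alpha^{-1}\,\eps_k/(1-\eps_k)\to0$, since $\norm T^{-1}\norm\le\alpha^{-1/2}$. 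This is exactly (\ref{Vproperty}), so $H$ is affiliated with $\cD$.

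The main obstacle is this last passage from a form bound to a norm bound on the resolvent difference: because $H-H_k$ is an unbounded first-order operator the naive resolvent identity is unavailable, and the real work lies in sandwiching $b_k$ between copies of $T^{-1}=(H+\alpha)^{-1/2}$ so as to turn a relative form bound into an honest operator inequality. The estimates in the second step are elementary once one commits to arguing throughout with the quadratic forms rather than the operators.
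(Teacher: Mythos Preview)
Your proof is correct and is essentially the same as the paper's \emph{second} approach, which is packaged there as Proposition~\ref{groupprop}, Theorem~\ref{generaltheorem} and Theorem~\ref{main1B}: one checks that $V_k$ preserves the form domain $\cG$, that $L_k-L\to 0$ in $\cL(\cG,\cG^*)$ (your form bound $|b_k(f,g)|\le\eps_k\norm Tf\norm\,\norm Tg\norm$ is exactly this statement), and then passes to resolvents via the second resolvent identity in the $\cG$--$\cG^*$ scale. Your sandwich formula $(H_k+\alpha)^{-1}=T^{-1}(I+S_k)^{-1}T^{-1}$ is a concrete rewriting of that identity.

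However, the proof the paper actually labels ``Proof of Theorem~\ref{main1}'' takes a genuinely different route: it shows $\rme^{-Ht}\in\cD$ by establishing Gaussian off-diagonal bounds $\norm P_E\rme^{-Ht}P_F\norm\le\rme^{-d(E,F)^2/4t}$ with respect to the Riemannian metric $\rmd s=a^{-1/2}\rmd x$, using the Davies exponential-weight method, and then invokes a Schur-type lemma on the block matrix $(P_m\rme^{-Ht}P_n)_{m,n\in\Z^N}$ to deduce (\ref{Vproperty}). That argument yields more---explicit kernel decay and a direct link to the finite-range description of $\cD$, and it generalizes to metric measure spaces---at the price of importing heat-kernel machinery. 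Your form-theoretic argument is shorter and more elementary, needs only the quadratic-form calculus, and (as the paper itself notes) extends immediately to measurable matrix-valued coefficients and higher-order divergence-form operators.
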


\begin{theorem}\label{main2}
If there exists a constant $c>0$ such that $c\leq a(x)<\infty$ for all $x\in \R^N$ then $A=(H+\alp I)^{-1}$ satisfies (\ref{Uproperty}) for all $\alp>0$.
\end{theorem}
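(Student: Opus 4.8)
Since $H$ is non-negative and self-adjoint and $\alp>0$, the operator $A=(H+\alp I)^{-1}$ is bounded and self-adjoint with $\norm A\norm\le\alp^{-1}$, so that $A^*=A$ and the second condition in \eqref{Uproperty} coincides with the first. Thus it suffices to prove $\lim_{s\to 0}\norm(U_s-I)A\norm=0$. The plan is to factor this estimate through the Sobolev space $H^1(\R^N)$: I will first show that the hypothesis $c\le a$ forces $\Ran(A)\subseteq H^1(\R^N)$ with $A\colon L^2(\R^N)\to H^1(\R^N)$ bounded, and then that the translation group is norm-continuous as a family of operators from $H^1(\R^N)$ into $L^2(\R^N)$. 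Composing the two bounds gives the claim.

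For the first step, recall that for the Friedrichs extension $\Dom(Q)=\Dom(H^{1/2})$ with $Q(f)=\norm H^{1/2}f\norm^2$ on this domain. Because $a(x)\ge c$, the form obeys $Q(f)=\int_{\R^N}a|\grad f|^2\,\rmd^Nx\ge c\norm\grad f\norm^2$ on $C_c^\infty(\R^N)$, and this inequality passes to the closure, so every $f\in\Dom(Q)$ lies in $H^1(\R^N)$ with $\norm\grad f\norm^2\le c^{-1}Q(f)$. Given $f\in L^2(\R^N)$, put $u=Af\in\Dom(H)\subseteq\Dom(Q)$. Pairing $(H+\alp I)u=f$ with $u$ gives $Q(u)+\alp\norm u\norm^2=\langle f,u\rangle\le\norm f\norm\,\norm u\norm$, whence $Q(u)\le\alp^{-1}\norm f\norm^2$ using $\norm u\norm\le\alp^{-1}\norm f\norm$. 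Combining this with the ellipticity bound yields $\norm\grad u\norm^2\le(c\alp)^{-1}\norm f\norm^2$, and together with $\norm u\norm\le\alp^{-1}\norm f\norm$ this shows $A\colon L^2(\R^N)\to H^1(\R^N)$ is bounded.

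For the second step, Plancherel gives, for $g\in H^1(\R^N)$,
\[
\norm(U_s-I)g\norm^2=\int_{\R^N}|\rme^{is\cdot\xi}-1|^2\,|\wdh g(\xi)|^2\,\rmd^N\xi\le|s|\int_{\R^N}(1+|\xi|^2)|\wdh g(\xi)|^2\,\rmd^N\xi=|s|\,\norm g\norm_{H^1}^2,
\]
using the elementary inequality $|\rme^{is\cdot\xi}-1|^2\le\min(|s|^2|\xi|^2,4)\le|s|(1+|\xi|^2)$. Hence $\norm U_s-I\norm_{H^1\to L^2}\le|s|^{1/2}$, and composing with the bound from the first step,
\[
\norm(U_s-I)A\norm_{L^2\to L^2}\le\norm U_s-I\norm_{H^1\to L^2}\,\norm A\norm_{L^2\to H^1}\le|s|^{1/2}\,\norm A\norm_{L^2\to H^1}\to 0
\]
as $s\to 0$, which completes the proof.

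The genuinely load-bearing point is the first step: the lower bound $c\le a$ is exactly what controls $\norm\grad u\norm$ by $Q(u)$ and hence yields $\Ran(A)\subseteq H^1(\R^N)$. The main thing to be careful about is that, since $a$ is not assumed bounded above, one cannot identify $\Dom(Q)$ with $H^1(\R^N)$; only the one-sided inclusion $\Dom(Q)\subseteq H^1(\R^N)$ holds, but this is all the argument requires.
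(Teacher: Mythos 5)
Your proof is correct and takes essentially the same route as the paper: both exploit the lower bound $a\ge c$ to factor the estimate through the Sobolev space $H^1(\R^N)$, the form domain of the comparison operator $H_0=-c\Delta$, and then deduce norm-continuity of translations out of that space via the Fourier transform. The only real difference is packaging: you prove the boundedness of $A\colon L^2(\R^N)\to H^1(\R^N)$ by an explicit and correct form-pairing computation, where the paper instead cites \cite[Section~4.2]{OPS} for the boundedness of $(H_0+I)^{1/2}(H+I)^{-1/2}$ and splits the resolvent into two square-root factors.
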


If the conditions of both theorems are satisfied, it follows that the operator $H$ is affiliated with $\cE$.

\begin{theorem}\label{main3}
  If $\lim_{|x|\to\infty} a(x)=0$ then $A=(H+\alp I)^{-1}$
  satisfies (\ref{Vproperty}) but not (\ref{Uproperty}) for every
  $\alp >0$. Hence the operator $H$ is affiliated with $\cD$ but not
  with $\cE$.
\end{theorem}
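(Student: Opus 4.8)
The first claim needs almost no work. Since $a$ is continuous and tends to $0$ at infinity it is bounded, so $0<a(x)\le c$ for some $c>0$, and Theorem~\ref{main1} applies verbatim to give (\ref{Vproperty}); thus $H$ is affiliated with $\cD$. For the negative statement I would first observe that $A=(H+\alp I)^{-1}$ is self-adjoint, so $\norm U_sA^\ast-A^\ast\norm=\norm U_sA-A\norm$ and the two limits in (\ref{Uproperty}) coincide. Hence it suffices to produce a constant $c_0>0$ with $\norm U_sA-A\norm\ge c_0$ for all sufficiently small $s\ne0$, which flatly contradicts (\ref{Uproperty}).

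The plan is to test $U_sA-A$ on highly oscillatory bumps pushed out to a region where $a$ is tiny. Fix a real $\chi\in C_c^\infty(\R^N)$ with $\norm\chi\norm=1$. Given a small $s\ne0$ I would set $\xi=\pi s/|s|^2$, so that $\xi\cdot s=\pi$ and $|\xi|=\pi/|s|$, and for a translation $c\in\R^N$ consider $\psi_c(x)=\rme^{i\xi\cdot x}\chi(x-c)$. Because $\rme^{i\xi\cdot s}=-1$, a one-line computation shows $\norm(U_s-I)\psi_c\norm=\norm\chi(\cdot+s)+\chi\norm$, which is independent of $c$ and tends to $2$ as $s\to0$; in particular it exceeds $\frac32$ once $|s|$ is small. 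This is the mechanism that keeps $(U_s-I)\psi_c$ of order one.

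The crux is that, despite the large frequency $|\xi|\sim 1/|s|$, the vector $\psi_c$ is an approximate null vector of $H$ provided $c$ is chosen far enough out. Indeed $Q(\psi_c)=\int a\,|\grad\psi_c|^2\le 2\big(\sup_{x\in c+\supp\chi}a(x)\big)\big(|\xi|^2+\norm\grad\chi\norm^2\big)$, and for our fixed $\xi$ the supremum tends to $0$ as $|c|\to\infty$ because $a$ decays at infinity. So for any prescribed $\delta>0$ I can pick $c=c_s$ with $Q(\psi_{c_s})<\delta$. The elementary spectral bound $\lambda^2/(\lambda+\alp)^2\le\lambda/\alp$ for $\lambda\ge0$ then yields $\norm(A-\alp^{-1}I)\psi_{c_s}\norm^2=\alp^{-2}\norm H(H+\alp I)^{-1}\psi_{c_s}\norm^2\le\alp^{-3}Q(\psi_{c_s})<\alp^{-3}\delta$, so $A\psi_{c_s}$ is close to $\alp^{-1}\psi_{c_s}$.

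Combining the two facts gives, for all small $s\ne0$, the estimate $\norm U_sA-A\norm\ge\norm(U_s-I)A\psi_{c_s}\norm\ge\alp^{-1}\norm(U_s-I)\psi_{c_s}\norm-2\norm(A-\alp^{-1}I)\psi_{c_s}\norm\ge\frac32\alp^{-1}-2\alp^{-3/2}\delta^{1/2}$, and fixing $\delta$ small once and for all (independently of $s$) makes the right-hand side at least $\alp^{-1}>0$. This contradicts (\ref{Uproperty}) and shows $H$ is not affiliated with $\cE$. The one genuine obstacle is the competition between the two demands on $\psi_c$: making $U_s\psi_c\approx-\psi_c$ forces the frequency up to $|\xi|\sim 1/|s|$, which would normally make $Q(\psi_c)$ large, whereas the argument needs $Q(\psi_c)$ small. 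The hypothesis $\lim_{|x|\to\infty}a(x)=0$ is exactly what resolves this, since it lets me drown the factor $a|\xi|^2$ by sending $c$ to infinity; this is also why the construction must break down under the lower bound of Theorem~\ref{main2}.
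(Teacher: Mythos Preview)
Your argument is correct. The quadratic-form bound $Q(\psi_c)\le\bigl(\sup_{c+\supp\chi}a\bigr)\cdot 2(|\xi|^2+\norm\grad\chi\norm^2)$ together with the spectral inequality $\lambda^2/(\lambda+\alp)^2\le\lambda/\alp$ cleanly gives $\norm(A-\alp^{-1}I)\psi_{c_s}\norm<\alp^{-3/2}\del^{1/2}$, and the rest is a triangle inequality. The only point I would make explicit is that $\psi_{c_s}\in C_c^\infty\subset\Dom(Q)$, so the spectral estimate via $\langle\psi,H\psi\rangle$ is legitimate; you use this without comment.

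The paper's own proof (given for the generalization, Theorem~\ref{main3-add-bis}) is different in flavour though not in spirit. Instead of building test vectors, it works at the operator level: setting $H_n=U_{c_n}HU_{-c_n}$ and using $a\to0$ to show $\langle H_nf,f\rangle\to0$ on $C_c^1$, it deduces that $R_n=(H_n^{1/2}+I)^{-1}\to I$ strongly. Since $\norm(U_s-I)R_n\norm=\norm(U_s-I)R\norm$ for every $n$ (unitary conjugation), assuming the latter is $<\eps$ and passing to the strong limit gives $\norm(U_s-I)f\norm\le\eps\norm f\norm$ for all $f$, contradicting $\norm U_s-I\norm=2$. A final functional-calculus step transfers the conclusion from $(H^{1/2}+I)^{-1}$ to $(H+I)^{-1}$.

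What each buys: your approach is more elementary and self-contained, avoids the detour through $H^{1/2}$, and yields the explicit lower bound $\norm U_sA-A\norm\ge\alp^{-1}$. The paper's approach is more operator-theoretic and makes transparent the link with asymptotic operators and Corollary~\ref{shade}: the translated resolvents converge to the identity, which lies in $\cD$ but not in $\cE$. Both methods extend to the weaker hypothesis of Theorem~\ref{main3-add-bis} (vanishing of $a$ only on a sequence of expanding balls), since in your construction one merely needs \emph{some} centre $c$ with $\sup_{c+\supp\chi}a$ small, not uniform decay.
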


\section{Proof of Theorem 1 and a generalization}\label{theorem1}

In this section we start by proving Theorem~\ref{main1}, and then formulate and prove a more general theorem that has potential for being extended to higher order elliptic operators.

\begin{proofname}{Proof of Theorem \ref{main1}}
We start by observing that it is sufficient to prove that
$\rme^{-Ht}\in\cD$ for all $t>0$, because
$(H+\alpha I)^{-1}=\int_0^\infty \rme^{-t(H+\alpha I)} \rmd t$ in norm.

We start by defining a new Riemannian metric on $\R^N$ by $\rmd
s=a(x)^{-1/2}\rmd x$, where $\rmd x$ is the standard Euclidean
metric. The associated Riemannian volume element is
$\mathrm{dvol}(x)=a(x)^{-N/2}\rmd^Nx$ where $\rmd^Nx$ is the
Euclidean volume element. One sees immediately that
\[
Q(f)=\int_{\R^N} |\grad_n f(x)|^2 \sig(x)^2\,
\mathrm{dvol}(x),\hspace{2em} \norm f\norm_2^2=\int_{\R^N} |f(x)|^2
\sig(x)^2\, \mathrm{dvol}(x)
\]
where $\grad_n f $ is the gradient of $f$ with respect to the new
metric and $\sig(x)=a(x)^{N/4}$. If the distance function  with
respect to the new metric is denoted by $d(x,y)$ then the uniform
bound $0<a(x)\leq c$ implies that \begin{equation} d(x,y)\geq
c^{-1/2}|x-y| \mbox{ for all }x,\, y\in\R^N. \label{dbound}
\end{equation}

If we define the distance $d(E,F)$ between two closed subsets $E,\,
F$ of $\R^N$ by
\[
d(E,F)=\inf\{ d(x,y):x\in E, \, y\in F\},
\]
we are able to apply Lemma~1 of \cite{EBD1}. This states that if
$\phi(x)=\rme^{\alp d(x,E)}$ for some $\alp \geq 0$ then
\[
\norm \phi \rme^{-Ht} f \norm \leq \rme^{\alp^2 t}\norm \phi f\norm
\]
for every $f\in L^2(\R^N)$ and $t>0$.

Let $P_E$ denote the operator on $\cH$ obtained by multiplying by
the characteristic function of $E$, and similarly for $F$. We claim
that
\begin{equation}
\norm P_E \rme^{-Ht}P_F\norm \leq
\rme^{-d(E,F)^2/4t}\label{heatbound}
\end{equation}
for all $t>0$. In order to prove this it is sufficient to establish
that
\[
|\langle \rme^{-Ht}f,g\rangle|\leq \rme^{-d(E,F)^2/4t}\norm f \norm
\, \norm g\norm
\]
for all $f=P_Ef\in L^2(\R^N)$ and $g=P_Fg\in L^2(\R^N)$. We have
\begin{eqnarray*}
|\langle \rme^{-Ht}f,g\rangle|&=& |\langle \phi\rme^{-Ht}f,\phi^{-1}g\rangle|\\
&\leq & \rme^{\alp^2 t} \norm \phi f\norm \, \norm \phi^{-1}g\norm\\
&\leq & \rme^{\alp^2 t} \norm f\norm \, \rme^{-\alp d(E,F)}\norm
g\norm.
\end{eqnarray*}
The proof is completed by putting $\alp=d(E,F)/2t$.

Now let $E_n$ be the unit cube in $\R^N$ with centre $n\in\Z^N$ and
vertices $(n_1\pm\half,...,n_N\pm\half)$, and let $P_n$ be the
corresponding projection. It is immediate that $\norm P_m
\rme^{-Ht}P_n\norm \leq 1$ for all $m,\, n\in \Z^N$ and all $t>0$
but (\ref{dbound}) and (\ref{heatbound}) together imply that
\[
\norm P_m \rme^{-Ht}P_n\norm \leq \rme^{-(|m-n|-k)^2/4ct}
\]
where $k$ depends on $N$. The proof is completed by the use of the
following lemma.
\end{proofname}

\begin{lemma}
If $A$ is any bounded operator on $L^2(\R^N)$ satisfying
\[
\norm P_mAP_n\norm \leq \mu(m-n)
\]
for all $m,n\in \Z^N$, where $\sum_{r\in \Z^N} \mu(r)<\infty$, then
$A\in\cD$.
\end{lemma}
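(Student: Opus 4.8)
The plan is to prove membership in $\cD$ directly from the defining condition \eqref{Vproperty}, i.e. to show that $\norm V_kAV_{-k}-A\norm\to0$ as $k\to0$. The argument rests on two ingredients: a Schur-type norm bound for operators with summable block structure, and a computation of the blocks $P_m(V_kAV_{-k}-A)P_n$ that exploits the fact that $V_k$, being a multiplication operator, commutes with every cube projection $P_n$.

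First I would record the Schur bound: if $T$ is bounded and $\norm P_mTP_n\norm\le\nu(m-n)$ for all $m,n\in\Z^N$ with $\sum_{r\in\Z^N}\nu(r)<\infty$, then $\norm T\norm\le\sum_{r\in\Z^N}\nu(r)$. Since the cubes $E_n$ tile $\R^N$ up to a set of measure zero, $\sum_nP_n=I$ strongly, so for $f\in\cH$ one has $\norm Tf\norm^2=\sum_m\norm P_mTf\norm^2$ and $\norm P_mTf\norm\le\sum_n\nu(m-n)\norm P_nf\norm$. Setting $a_n=\norm P_nf\norm$, the right-hand side is the convolution $(\nu*a)_m$, and Young's inequality gives $\norm Tf\norm\le\norm\nu\norm_1\norm a\norm_2=\bigl(\sum_r\nu(r)\bigr)\norm f\norm$. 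This step is routine.

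The substantive computation is the block structure. Because $V_k$ commutes with $P_m$ and $P_n$, one has $P_mV_kAV_{-k}P_n=(V_kP_m)A(P_nV_{-k})$. On the unit cube $E_n$ the function $\rme^{-ik\cdot x}$ equals $\rme^{-ik\cdot n}\rme^{-ik\cdot(x-n)}$, and since $|x-n|\le\sqrt N/2$ there, multiplication by $\rme^{-ik\cdot(x-n)}$ differs from the identity on $\Ran P_n$ by an operator of norm at most $\sqrt N\,|k|/2$; the same holds for $\rme^{ik\cdot x}$ on $\Ran P_m$. Writing $V_kP_m=\rme^{ik\cdot m}(P_m+S_m)$ and $P_nV_{-k}=\rme^{-ik\cdot n}(P_n+R_n)$ with $\norm S_m\norm,\norm R_n\norm\le\sqrt N\,|k|/2$ and expanding, I obtain
\[
P_m(V_kAV_{-k}-A)P_n=(\rme^{ik\cdot(m-n)}-1)P_mAP_n+\rme^{ik\cdot(m-n)}\,\mathrm{Err}_{mn},
\]
where $\mathrm{Err}_{mn}=S_mAP_n+P_mAR_n+S_mAR_n$ satisfies $\norm\mathrm{Err}_{mn}\norm\le\delta(k)\mu(m-n)$ with $\delta(k)=\sqrt N\,|k|+N|k|^2/4\to0$, using the hypothesis $\norm P_mAP_n\norm\le\mu(m-n)$.

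Combining these, $\norm P_m(V_kAV_{-k}-A)P_n\norm\le\nu_k(m-n)$ with $\nu_k(r)=|\rme^{ik\cdot r}-1|\,\mu(r)+\delta(k)\mu(r)$, so the Schur bound yields $\norm V_kAV_{-k}-A\norm\le\sum_r|\rme^{ik\cdot r}-1|\mu(r)+\delta(k)\sum_r\mu(r)$. I expect the main obstacle to be the first sum: the naive estimate $|\rme^{ik\cdot r}-1|\le|k|\,|r|$ produces $|k|\sum_r|r|\mu(r)$, which need not converge, since no decay rate beyond summability is assumed on $\mu$. The correct route is dominated convergence: for each fixed $r$ the summand tends to $0$ as $k\to0$ and is dominated by the summable function $2\mu(r)$, so the whole sum tends to $0$; the second term tends to $0$ because $\delta(k)\to0$ and $\sum_r\mu(r)<\infty$. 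Hence $\norm V_kAV_{-k}-A\norm\to0$, which is precisely \eqref{Vproperty}, so $A\in\cD$.
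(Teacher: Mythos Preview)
Your proof is correct. The core ideas---that $V_k$ commutes with each $P_n$, that on the cube $E_n$ the operator $V_k$ is the scalar $\rme^{ik\cdot n}$ times a perturbation of the identity of size $O(|k|)$ uniformly in $n$, and that the summability of $\mu$ then controls everything---are exactly those of the paper. The only difference is packaging: the paper first writes $A=\sum_{r}B_r$ with $B_r=\sum_{n-m=r}P_mAP_n$ as a norm-convergent series (using $\norm B_r\norm\le\mu(r)$), shows each $B_r\in\cD$ via the same factorization you use, and then invokes the norm-closedness of $\cD$; you instead bound $\norm V_kAV_{-k}-A\norm$ directly via the Schur estimate and dominated convergence in $r$. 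The two organizations are equivalent---your dominated-convergence step is precisely what underlies the paper's ``partial sums plus closedness'' argument---so neither buys anything the other does not.
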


\begin{proof}
Given $r\in \Z^N$, we define $B_r=\sum_{n-m=r}P_mAP_n$ and represent
$L^2(\R^N)$ as the orthogonal direct sum of the subspaces
$L^2(E_n)$. It follows directly from the definition of the operator
norm that $\norm B_r\norm \leq \mu(r)$ and hence that $\sum_{r\in
\Z^N} B_r=A$ as a norm convergent series of operators. It remains
only to prove that each $B_r$ satisfies (\ref{Vproperty}) and hence
lies in $\cD$.

One can prove this by considering each term $P_mAP_n$ independently
provided the norm convergence of $V_kP_mAP_nV_{-k}$ to $P_mAP_n$ is
uniform with respect to $m,\, n$ subject to $n-m=r$. This follows
from the representation
\[
V_kP_mAP_nV_{-k}=\rme^{-ik\cdot r}\left(V_k\rme^{-ik\cdot
m}\right)P_mAP_n\left( V_{-k}\rme^{ik\cdot n}\right).\vspace{-4.9ex}
\]
\end{proof}\vspace{2.3ex}

Our second version of Theorem~\ref{main1} depends on a very general
theorem, that may be applied to higher order elliptic operators of
the type considered in \cite{EBD2}.

\label{cr5}
Given a non-negative self-adjoint operator $H$ on a Hilbert space
$\cH$, we shall use the following concepts and notation freely. Let $\cG=\Dom(H^{1/2})$ be its form domain equipped with the graph topology. Identify $\cG\subset\cH\subset\cG^\ast$ in the usual manner and let $L:\cG\to\cG^\ast$ denote the unique continuous linear operator that extends $H$ from $\Dom(H)$ to $\cG$.

\begin{proposition}\label{groupprop} Let
  $\{V_k\}_{k\in\R^N}$ be a strongly continuous unitary group on
  $\cH$ such that $V_k\cG\subset\cG$ for all $k\in\R^N$. Then the
  restrictions $V_k':=V_k|\cG$ define a $C_0$-group of bounded
  operators on the Hilbert space $\cG$.
\end{proposition}

In applications the conclusions of this proposition are often as
easy to verify as the hypothesis, but a proof may be found in
\cite[Proposition 3.2.5]{ABG}.

From now on we assume that the conditions of Proposition~\ref{groupprop} are satisfied. By taking adjoints we see that each $V_k$ extends to a bounded operator $V_k^{\pr\pr}$ on $\cG^\ast$ and that the $V_k^{\pr\pr}$ form a $C_0$-group of bounded operators on the Hilbert space $\cG^\ast$. In what follows we use the same notation $V_k$ for these three groups, which of them is involved being clear from the context.

\begin{theorem}\label{generaltheorem}
Let $L_k:=V_kLV_{-k}\in\cL(\cG,\cG^*)$. If
\[
\lim_{k\to 0} \norm L_k-L\norm =0
\]
in $\cL(\cG,\cG^*)$, then
\[
\lim_{k\to 0}
\norm V_k\varphi(H)V_{-k}-\varphi(H)\norm =0
\]
in $\cL(\cH)$ for every $\varphi\in C_0(\R)$.
\end{theorem}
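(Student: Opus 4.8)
The plan is to reduce the claim, by a functional-calculus density argument, to the single statement that the resolvents converge in norm, and then to prove that convergence using the resolvent identity transported to the rigged space $\cG\subset\cH\subset\cG^\ast$. First I would record the basic reduction. Writing $H_k:=V_kHV_{-k}$, a non-negative self-adjoint operator unitarily equivalent to $H$, the spectral theorem gives $V_k\varphi(H)V_{-k}=\varphi(H_k)$ for every bounded Borel $\varphi$, so the conclusion is that $\norm\varphi(H_k)-\varphi(H)\norm\to0$ as $k\to0$. Let $\cF$ be the set of $\varphi\in C_0(\R)$ for which this holds. Using $\norm\psi(H_k)\norm_{\cL(\cH)}\le\norm\psi\norm_\infty$, valid for any self-adjoint operator and any $\psi$, one checks in the usual way that $\cF$ is a norm-closed *-subalgebra: closedness under uniform limits follows by splitting $\varphi(H_k)-\varphi(H)$ through an approximant $\varphi_n$ and dominating the two outer terms by $\norm\varphi-\varphi_n\norm_\infty$ uniformly in $k$; the product and conjugate are handled by $\varphi(H_k)\psi(H_k)-\varphi(H)\psi(H)=\varphi(H_k)[\psi(H_k)-\psi(H)]+[\varphi(H_k)-\varphi(H)]\psi(H)$ and by $\varphi(H_k)^\ast=\overline\varphi(H_k)$. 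Because $H,H_k\ge0$, membership in $\cF$ depends only on $\varphi|_{[0,\infty)}$, and the associated set $\cF'\subseteq C_0([0,\infty))$ is again a closed *-subalgebra; since the functions $r_\alp(\lambda)=(\lambda+\alp)^{-1}$, $\alp>0$, generate $C_0([0,\infty))$ as a C*-algebra (on the compactification $[0,\infty]$ each $r_\alp$ extends continuously, is injective, and vanishes at infinity, so Stone--Weierstrass applies), it suffices to show each $r_\alp\in\cF'$, that is, $\norm(H_k+\alp I)^{-1}-(H+\alp I)^{-1}\norm_{\cL(\cH)}\to0$.

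For this key step I would work in $\cL(\cG^\ast,\cG)$. Because $H\ge0$, for $\alp>0$ coercivity of the form makes $L+\alp:\cG\to\cG^\ast$ an isomorphism, with inverse $R_\alp:=(L+\alp)^{-1}\in\cL(\cG^\ast,\cG)$ whose restriction to $\cH$ is precisely $(H+\alp I)^{-1}$. The same holds for $L_k$, and from $L_k+\alp=V_k(L+\alp)V_{-k}$ one gets $(L_k+\alp)^{-1}=V_kR_\alp V_{-k}$ in $\cL(\cG^\ast,\cG)$, whose restriction to $\cH$ is $(H_k+\alp I)^{-1}$. The resolvent identity in the rigged space reads
\[
(L_k+\alp)^{-1}-(L+\alp)^{-1}=(L_k+\alp)^{-1}(L-L_k)(L+\alp)^{-1},
\]
where the three factors compose in the order $\cG^\ast\to\cG$, then $L-L_k:\cG\to\cG^\ast$, then $\cG^\ast\to\cG$. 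Since the embeddings $\cG\hookrightarrow\cH\hookrightarrow\cG^\ast$ are continuous, restricting to $\cH$ the $\cL(\cH)$-norm of the left-hand side is dominated by a fixed multiple of its $\cL(\cG^\ast,\cG)$-norm, hence by
\[
\norm(L_k+\alp)^{-1}\norm_{\cG^\ast\to\cG}\,\norm L-L_k\norm_{\cG\to\cG^\ast}\,\norm(L+\alp)^{-1}\norm_{\cG^\ast\to\cG}.
\]

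It then remains to bound the first factor uniformly in $k$ near $0$. From $(L_k+\alp)^{-1}=V_kR_\alp V_{-k}$ we have $\norm(L_k+\alp)^{-1}\norm_{\cG^\ast\to\cG}\le\norm V_k\norm_{\cG\to\cG}\,\norm R_\alp\norm_{\cG^\ast\to\cG}\,\norm V_{-k}\norm_{\cG^\ast\to\cG^\ast}$, and by Proposition~\ref{groupprop} together with its dual statement on $\cG^\ast$ the restrictions of $\{V_k\}$ to $\cG$ and to $\cG^\ast$ are $C_0$-groups, hence locally bounded; so this factor stays bounded by a constant for $|k|$ small. Feeding the hypothesis $\norm L_k-L\norm_{\cG\to\cG^\ast}\to0$ into the displayed estimate yields $\norm(H_k+\alp I)^{-1}-(H+\alp I)^{-1}\norm_{\cL(\cH)}\to0$, which completes the key step and, via the reduction above, the proof.

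The part demanding most care is this key step: one must interpret every operator consistently as a map between $\cG$, $\cH$ and $\cG^\ast$ so that the resolvent identity and the factorization $(L_k+\alp)^{-1}=V_kR_\alp V_{-k}$ are literally valid there, and one must invoke the local boundedness of the $C_0$-groups on $\cG$ and $\cG^\ast$ in order to convert the \emph{pointwise} smallness of $L-L_k$ into \emph{norm} resolvent convergence. By contrast, the reduction to resolvents through the closed *-subalgebra $\cF$ is routine.
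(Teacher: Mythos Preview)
Your proof is correct and follows essentially the same approach as the paper: reduce via Stone--Weierstrass to the single resolvent case, then use the resolvent identity in $\cL(\cG^\ast,\cG)$ together with the local boundedness of the $C_0$-groups on $\cG$ and $\cG^\ast$ to control $\norm(L_k+\alp)^{-1}\norm_{\cG^\ast\to\cG}$ uniformly for small $k$. The paper's version is terser (it takes $\alp=1$ and merely cites Stone--Weierstrass for the reduction), but the argument is the same.
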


\begin{proof}
A standard argument involving the Stone-Weierstrass theorem shows that it suffices to consider the case $\varphi(H)=(H+I)^{-1}\equiv R$. It is easy to see that for each $k\in \R^N$ we have
\[
R_k:=V_kRV_{-k}=(L_k+I)^{-1}|_\cH.
\]
Hence
\begin{eqnarray*}
\|R_k-R\| &=&\norm (L_k+I)^{-1}|_\cH-(L+I)^{-1}|_\cH\norm_{\cL(\cH)}\\
&\leq&\|(L_k+I)^{-1}-(L+I)^{-1}\|_{\cL(\cG^*,\cG)} \\
&\leq& \|(L_k+I)^{-1}\|_{\cL(\cG^*,\cG)}
\|L-L_k\|_{\cL(\cG,\cG^*)}
\|(L+I)^{-1}\|_{\cL(\cG^*,\cG)}\\
&\leq& C \|L-L_k\|_{\cL(\cG,\cG^*)}
\end{eqnarray*}
for some constant $C$ that is independent of $k$ subject to $|k|\leq
1$; this uses the fact that the group $V_k$ is of class $C_0$ in
$\cG$ and $\cG^*$.
\end{proof}

In the following theorem and elsewhere $\cM$ denotes the set of
non-negative, real, self-adjoint $N\times N$ matrices. The
closability assumption of the next theorem holds if for every ball
$B\subset \R^N$ there exists a constant $c_B>0$ such that $a(x)\geq
c_B I$ for all $x\in B$; see \cite[Theorem 1.2.6]{HKST}.

From this point onwards we put $P_r=-i\frac{\partial}{\partial x_r}$, abandoning the convention that $P_r$ denotes a projection, unless this is explicitly stated.

\begin{theorem}\label{main1B}
Let $a:\R^N\to\cM$ be a bounded measurable function and suppose
  that the quadratic form
\begin{equation}
Q(f)=\sum_{r,s=1}^N \langle P_r f, a_{r,s}    \label{cr6}
P_s f \rangle \label{matrixcoeffs}
\end{equation}
on $C_c^1(\R^N)$ is positive and closable. If $H$ is the self-adjoint
operator associated to the closure then $A=(H+\alp I)^{-1}$
satisfies (\ref{Vproperty}) for all $\alp >0$.
\end{theorem}

\begin{proof}
The form domain $\cG$ of $H$ is the completion of $C_c^1(\R^N)$ for the norm $(Q(f)+\|f\|^2)^{1/2}$. Formally $L:\cG\to \cG^\ast$ is given by
\[
L=\sum_{r,s=1}^N P_r a_{r,s} P_s
\quad\mbox{ and }\quad
V_k L V_{-k}= \sum_{r,s=1}^N (P+k)_r\, a_{r,s}
(P+k)_s
\]
Therefore $V_k L V_{-k}$ is a quadratic polynomial in $k\in\R^N$.
The only thing one still has to prove in order to apply
Theorem~\ref{generaltheorem} is that $\cG$ is stable under the
multiplication operators $V_k=\rme^{ik\cdot Q}$. This follows
directly from the inequality
\begin{eqnarray*}
Q(V_kf)&=& \int |Pf + kf|^2_{a(x)}\, \rmd x\\
&\leq& 2\int |Pf|^2_{a(x)}\,\rmd x + 2 \int |kf|^2_{a(x)}\,\rmd x\\
&\leq& 2Q(f) + C |k|^2\|f\|^2
\end{eqnarray*}
where $|\cdot|_{a(x)}$ is the norm on $\C^N$ associated to the
quadratic form $a_{r,s}(x)$.  \label{cr7}
\end{proof}

\section{Proof of Theorem 2 and a generalization}
\label{theorem2}

\begin{proofname}{Proof of Theorem \ref{main2}}
  Let $H$ be a non-negative self-adjoint operator acting in
  $L^2(\R^N,\rmd^Nx)$. Assume that $H\geq H_0=c(-\lap)^m$ in the
  sense of quadratic forms for some $c>0$ and some positive integer
  $m$. This implies that
\[
A=(H_0+I)^{1/2}(I+H)^{-1/2}
\]
is bounded by \cite[Section~4.2]{OPS}. Therefore
\begin{eqnarray*}
\norm (U_s-I)(H+I)^{-1}\norm &=& \norm (U_s-I)(H_0+I)^{-1/2}A(H+I)^{-1/2}\norm\\
&\leq & c \norm (U_s-I)(H_0+I)^{-1/2}\norm.\\
&=& c\sup_{\xi\in\R^N}\left| (\rme^{is\cdot \xi}-1)(1+|\xi|^{2m})^{-1/2}\right|
\end{eqnarray*}
by the use of the Fourier transform. The final expression converges
uniformly to zero as $s\to 0$ by an elementary argument.
\end{proofname}

The proof of Theorem~\ref{main2} can be extended to higher order elliptic operators, but it actually holds in much more generality. Let $\cH=L^2(\R^N,\rmd^N x)$ and let $\cU$ denote the class of all continuous functions $f:\R^N\to [1,\infty)$ such that
$\lim_{|k|\to\infty} f(k)=+\infty$. We define $f(P)$ to be the
unbounded positive self-adjoint operator defined in $\cH$ by
\[
(\cF f(P)\psi)(k)=f(k)(\cF\psi)(k)
\]
where $\cF$ is the Fourier transform and $\Dom(f(P))$ is the set of
all $\psi\in \cH$ such that
\[
\int_{\R^N}\left| f(k)(\cF \psi)(k)\right|^2\, \rmd^N k <\infty.
\]

The following theorem is in \cite[Lemma~3.8]{Georgescu1}, but the proof below is adapted from \cite{EBDscattering}, which only treats the case in which $A$ is compact.

\begin{theorem}\label{aux}
The bounded self-adjoint operator $A$ satisfies (\ref{Uproperty}) if and only if there exists $f\in\cU$ such that $\Ran(A)\subseteq \Dom(f(P))$.
\end{theorem}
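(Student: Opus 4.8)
The plan is to prove both implications directly, exploiting the fact that $U_s=\rme^{is\cdot P}$ acts on the Fourier side as multiplication by $\rme^{is\cdot k}$, so that every operator built from the $U_s$ or from $f(P)$ is a Fourier multiplier whose operator norm equals the sup-norm of its symbol. Since $A=A^\ast$, condition (\ref{Uproperty}) is equivalent to the single requirement $\lim_{s\to0}\norm U_sA-A\norm=0$, and I shall work only with this.

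For the easy implication, suppose $f\in\cU$ satisfies $\Ran(A)\subseteq\Dom(f(P))$. Then $B:=f(P)A$ is everywhere defined and closed, hence bounded by the closed graph theorem, and $A=f(P)^{-1}B$ because $f\geq1$ makes $f(P)^{-1}$ a bounded inverse. Thus $U_sA-A=g_s(P)B$, where $g_s(k)=(\rme^{is\cdot k}-1)f(k)^{-1}$, and it remains only to check that $\norm g_s(P)\norm=\sup_k|g_s(k)|\to0$. I would split the supremum at a radius $R$ chosen, using $f(k)\to\infty$, so that $2/f(k)<\eps$ for $|k|>R$; on $|k|\leq R$ one has $|g_s(k)|\leq|s|R$ since $f\geq1$. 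This gives $\norm U_sA-A\norm\leq\norm g_s(P)\norm\,\norm B\norm\to0$.

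The substantial work is the converse. Set $\eta(\delta)=\sup_{|s|\leq\delta}\norm U_sA-A\norm$, which tends to $0$ by hypothesis, and introduce the ball-average $M_\delta=|B_\delta|^{-1}\int_{B_\delta}U_s\,\rmd s=m_\delta(P)$, whose symbol is real and satisfies the scaling identity $m_\delta(k)=m_1(\delta k)$, where $m_1$ is a Bessel-type multiplier decaying to $0$ at infinity. Averaging the hypothesis gives $\norm(I-m_\delta(P))A\norm\leq\eta(\delta)$. Fixing a radius $\rho_0$ with $|m_1(k)|\leq\half$ for $|k|\geq\rho_0$, one has $1-m_\delta(k)\geq\half$ whenever $|k|\geq\rho_0/\delta$, so inverting this multiplier on the high-frequency subspace yields $\norm\Pi_{\rho_0/\delta}A\norm\leq2\eta(\delta)$, where $\Pi_R$ denotes multiplication by the indicator of $\{|k|\geq R\}$ on the Fourier side. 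Since $\eta(\delta)\to0$ and $\rho_0/\delta\to\infty$, and $R\mapsto\norm\Pi_RA\norm$ is non-increasing, I conclude that $\gamma(R):=\norm\Pi_RA\norm\to0$ as $R\to\infty$.

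Finally I would manufacture $f$ from this decay. Choosing $R_n\uparrow\infty$ with $\gamma(R_n)\leq4^{-n}$ and letting $f$ be any continuous, radial, non-decreasing function with $f\equiv1$ near the origin and $f=2^n$ on $\{|k|=R_n\}$, one has $f\in\cU$. Writing $E_n$ for the spectral projection of $|P|$ onto the annulus $R_{n-1}\leq|k|<R_n$ and using orthogonality together with $\norm E_nA\psi\norm\leq\gamma(R_{n-1})\norm\psi\norm$, one finds $\norm f(P)A\psi\norm^2=\sum_{n\geq0}\norm f(P)E_nA\psi\norm^2\leq\bigl(\norm A\norm^2+\sum_{n\geq1}(2^n\gamma(R_{n-1}))^2\bigr)\norm\psi\norm^2$, a convergent geometric series; hence $f(P)A$ is bounded and $\Ran(A)\subseteq\Dom(f(P))$. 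The main obstacle is the middle step, namely distilling the single scalar modulus $\eta$ into the genuinely operator-theoretic frequency-localization bound $\gamma(R)\to0$, for which the averaging-and-inversion device above seems to be the cleanest route; the concluding construction is then only careful bookkeeping in a weighted orthogonal sum.
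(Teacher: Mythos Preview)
Your proof is correct, but it takes a somewhat different route from the paper's for the substantial (converse) direction. The paper also introduces an approximate identity, but uses the Gaussian multipliers $h_t(k)=\rme^{-|k|^2t}$: from the $U$-property one gets $\norm (I-h_t(P))A\norm\to0$, and then the paper chooses $t_n$ with $\norm(I-h_{t_n}(P))A\norm\leq 2^{-n}$ and defines $f$ \emph{directly} by $f=1+\sum_{n\geq1}(1-h_{t_n})$, checking that this sum converges locally uniformly to an element of $\cU$ and that $\norm f(P)A\norm\leq 2$. Your argument instead uses ball averages $M_\delta=m_\delta(P)$, then inverts the multiplier $1-m_\delta$ on high frequencies to extract the intermediate conclusion $\gamma(R)=\norm\Pi_R A\norm\to0$, and only afterwards manufactures $f$ by hand from the decay rate of $\gamma$. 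The paper's construction is slicker and avoids the explicit inversion/localization step; your approach is more modular and makes transparent the equivalent characterization ``$\norm\Pi_R A\norm\to0$ as $R\to\infty$'', which is worth knowing in its own right. Both arguments ultimately rest on the same idea: any convolution-type approximate identity applied to $A$ converges in norm, and this forces the Fourier mass of $\Ran(A)$ to be uniformly concentrated on compacta.
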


\begin{proof}
Suppose that such an $f$ exists. The function $g(k)=\{f(k)\}^{-1}$
is a positive continuous function in $C_0(\R^N)$ and
\[
\norm U_sA-A\norm=\norm (U_sg(P)-g(P))f(P)A\norm \leq \norm
U_sg(P)-g(P)\norm \, \norm f(P)A\norm.
\]
This converges to zero in norm as $s\to 0$ because $(\rme^{ik\cdot
s}-1)g(k)$ converges uniformly to $0$ as $s\to 0$.

Conversely suppose that $A$ lies in the set $\cB$ of all operators satisfying (\ref{Uproperty}). If $h$ lies in the Schwartz
space $\cS$ then
\[
h(P)A=\int_{\R^N}\widetilde{h}(x)U_xA\, \rmd^N x
\]
where $\widetilde{h}$ is the inverse Fourier transform of $h$ and the
integrand is norm continuous. Putting $h_t(k)=\rme^{-k^2t}$ where
$t>0$, or equivalently
\[
\widetilde{h}_t(x)=(4\pi t)^{-N/2} \rme^{-|x|^2/4t}
\]
the assumption that $A\in\cB$ implies that
\[
\lim_{t\to 0}\norm h_t(P)A-A\norm =0.
\]

Now let $t_n$ be a sequence such that $0<t_n\leq 1/2^n$  and $\norm
\{I-h_{t_n}(P)\} A\norm \leq 1/2^n$ for all $n\geq 1$. If
\[
f_M(k)=1+\sum_{n=1}^M (1-h_{t_n}(k))
\]
then $f_M$ is a continuous function on $\R^N$ satisfying $1\leq
f_M(k)\leq M+1$ for all $k$ and $\lim_{|k|\to \infty}f_M(k)=M+1$.
Moreover
\[
\norm f_M(P)A\norm \leq 1+\sum_{n=1}^M \norm \{I-h_{t_n}(P)\} A\norm
<2
\]
for all $M$. If $k\in\R^N$ then
\[
0\leq 1-h_{t_n}(k)=1-\rme^{-|k|^2t_n}\leq \frac{|k|^2}{2^n}
\]
so the sequence $f_M$ increases monotonically and locally uniformly
to a continuous limit $f$. This function $f$ satisfies $1\leq
f(k)\leq 1+|k|^2$ for all $k$ and $\lim_{|k|\to\infty}
f(k)=+\infty$. An application of the closed graph theorem finally
establishes that $\norm f(P)A\norm \leq 2$.
\end{proof}

\section{Proof of Theorem 3 and a generalization}\label{theorem3}

Theorem~\ref{main3} is a special case of the following theorem, which can easily be adapted to higher order elliptic differential operators written in divergence form.

\begin{theorem}\label{main3-add-bis}
  Let $a:\R^N\to \cM$ be a bounded measurable function such that the
  quadratic form $Q_0$ defined on $C_c^1(\R^N)$ by
  (\ref{matrixcoeffs}) is closable. If $H$ is the positive
  self-adjoint operator associated to its closure then $H$ is
  affiliated with $\cD$. If there is a sequence of points
  $c_n\in\R^N$ such that $ c_n\to\infty$ and a sequence of real
  numbers $r_n\to\infty$ such that $\sup_{|x-c_n|\leq r_n}|a(x)|\to
  0$, then $H$ is not affiliated with $\cE$.
\end{theorem}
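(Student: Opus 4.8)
Part one is nothing more than Theorem~\ref{main1B}: the hypotheses there (a bounded $\cM$-valued $a$, so the form is automatically positive, with $Q_0$ closable) and its conclusion (property \eqref{Vproperty} for $A=(H+\alp I)^{-1}$, $\alp>0$) coincide with the first assertion, so $H$ is affiliated with $\cD$.

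For the second assertion I argue by contradiction and replace $A=(H+I)^{-1}$ by its square root $B=(H+I)^{-1/2}$. Both lie in $\cD$: $A$ by the first part, and $B=\sqrt A$ because it lies in the C*-subalgebra generated by $A$ and $\cD$ is a C*-algebra. Hence, by the definition of $\cE$, each of $A,B$ lies in $\cE$ iff it satisfies \eqref{Uproperty}. If $A$ satisfied \eqref{Uproperty} then $A\in\cE$, whence $B=\sqrt A\in\cE$, so $B$ would satisfy \eqref{Uproperty}. Applying Theorem~\ref{aux} to the bounded self-adjoint operator $B$ would give $g\in\cU$ with $\Ran(B)\subseteq\Dom(g(P))$, and then $g(P)B$ is bounded by the closed graph theorem, say $\norm g(P)B\norm\leq C$. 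Writing $v=Bu$, so that $u=(H+I)^{1/2}v$ and $\norm u\norm^2=Q(v)+\norm v\norm^2$ as $v$ ranges over the form domain $\cG$, this boundedness becomes exactly the form inequality
\[
\norm g(P)v\norm^2\leq C^2\big(Q(v)+\norm v\norm^2\big)\qquad\mbox{for all }v\in\cG .
\]
The reason for passing to $B$ is that the analogous computation for $A$ produces the operator graph norm $\norm(H+I)v\norm$, which involves derivatives of $a$ that the hypotheses do not control.

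I would then violate this inequality using high-frequency bumps supported where $a$ is small. Fix $\chi\in C_c^\infty(\R^N)$ real and even with $\norm\chi\norm_2=1$ and support in the unit ball, put $\eps_n=\sup_{|x-c_n|\leq r_n}|a(x)|$, and set $v_n(x)=r_n^{-N/2}\chi\!\big((x-c_n)/r_n\big)\rme^{i\xi_n\cdot x}$, so that $v_n\in C_c^\infty\subset\cG$, $\norm v_n\norm_2=1$, and $\supp v_n\subseteq B(c_n,r_n)$. Since $|a(x)|\leq\eps_n$ there, $Q(v_n)\leq\eps_n\norm Pv_n\norm^2$, and a direct Fourier computation using the evenness of $\chi$ gives $\norm Pv_n\norm^2=|\xi_n|^2+\norm\grad\chi\norm^2/r_n^2$, whence $Q(v_n)\lesssim\eps_n|\xi_n|^2$. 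Meanwhile $\widehat{v_n}$ concentrates at $\xi_n$ with width $\sim1/r_n\to0$, so if $|\xi_n|\to\infty$ then $\norm g(P)v_n\norm^2\to\infty$. Finally I choose $|\xi_n|\to\infty$ with $\eps_n|\xi_n|^2\to0$, which is possible because $\eps_n\to0$ (for instance $|\xi_n|=\min(n,\eps_n^{-1/4})$); then the right-hand side $C^2(Q(v_n)+1)$ stays bounded while the left-hand side diverges, a contradiction. Hence no such $g$ exists, $B$ fails \eqref{Uproperty}, and therefore so does $A$, so $H$ is not affiliated with $\cE$.

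The main obstacle is the lower bound $\norm g(P)v_n\norm^2\to\infty$: since $\cU$ only requires $g(k)\to\infty$ with no prescribed rate, one cannot simply replace $\norm g(P)v_n\norm$ by $g(\xi_n)$ up to a controlled error. Instead I would argue qualitatively: for each $M$ pick $R$ with $g\geq M$ on $\{|k|\geq R\}$, and observe that the frequency mass of $v_n$ in $\{|k|<R\}$ equals the $|\widehat\chi|^2$-mass of a ball pushed out to distance $\geq r_n$ from the origin, which vanishes as $r_n\to\infty$ by the Schwartz decay of $\widehat\chi$; thus $\norm g(P)v_n\norm^2\geq M^2/2$ for $n$ large. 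This is precisely where the hypothesis $r_n\to\infty$ is essential, while $\eps_n\to0$ enters only in the final choice of $\xi_n$, balancing the growth of the frequency against the smallness of the coefficients.
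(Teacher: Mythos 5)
Your proposal is correct, but its second (main) half takes a genuinely different route from the paper's. The paper argues by translation: setting $H_n=U_{c_n}HU_{-c_n}$, the hypothesis gives $\langle H_nf,f\rangle\to 0$ for all $f\in C_c^1(\R^N)$, hence $R_n=(H_n^{1/2}+I)^{-1}\to I$ strongly; since $\norm (U_s-I)R_n\norm=\norm (U_s-I)R\norm$ for $R=(H^{1/2}+I)^{-1}$, a uniform modulus of continuity in \eqref{Uproperty} for $R$ would pass to the limit and force $\norm (U_s-I)f\norm\leq\eps\norm f\norm$, contradicting $\norm U_s-I\norm=2$ for $s\neq 0$; functional calculus then transfers the conclusion from $R$ to $(H+I)^{-1}$. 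You instead take the paper's characterization (Theorem~\ref{aux}) as the key lemma, apply it to $B=(H+I)^{-1/2}$, convert affiliation with $\cE$ into the Sobolev-type inequality $\norm g(P)v\norm^2\leq C^2\bigl(Q(v)+\norm v\norm^2\bigr)$ on the form domain, and refute that inequality with high-frequency wave packets supported in the balls where $a$ is small. All the steps you need do check out: $B=\sqrt{A}$ lies in $\cD$ (resp.\ $\cE$) whenever the positive element $A$ does, because both are C*-algebras; $\Ran(B)$ is exactly the form domain $\cG\supseteq C_c^\infty(\R^N)$, so your test functions $v_n$ lie in $\Dom(g(P))$ and the inequality legitimately applies to them; the cross term in $\norm Pv_n\norm^2$ vanishes (realness of $\chi$ already suffices; evenness is not needed); and your qualitative lower bound on $\norm g(P)v_n\norm^2$, using only that $g\geq M$ outside a ball together with the decay of $\widehat\chi$, correctly copes with the fact that $g\in\cU$ has no prescribed growth rate. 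Comparing the two: the paper's argument is softer and shorter and fits its asymptotic-operator philosophy (weak limits of translates, as in its Lemma on that topic); yours is more quantitative, passing from $A$ to $B$ precisely to avoid derivatives of the merely measurable $a$, and it cleanly isolates the roles of the two hypotheses ($r_n\to\infty$ for frequency concentration of $\widehat{v_n}$, $\eps_n\to 0$ to keep $Q(v_n)$ bounded while $|\xi_n|\to\infty$). One cosmetic point: the choice $|\xi_n|=\min(n,\eps_n^{-1/4})$ needs the convention $\eps_n^{-1/4}=+\infty$ when $\eps_n=0$, which is harmless.
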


\begin{proof} The first statement of the theorem is contained in Theorem~\ref{main1B}. Let $H_n=U_{c_n}HU_{-c_n}$ where $c_n\in\R^N$ are as stated. The assumptions of the theorem imply that $\lim_{n\to\infty} \langle H_nf,f\rangle=0$ for all $f\in C_c^1(\R^N)$. Equivalently $\lim_{n\to\infty} H_n^{1/2}f=0$ for all $f\in C_c^1(\R^N)$. Then for such $f$ we have
\[
f-(H_n^{1/2}+I)^{-1}f=(H_n^{1/2}+I)^{-1}H_n^{1/2}f
\]
hence $R_n=(H_n^{1/2}+I)^{-1}$ converges strongly to $I$ as
$n\to\infty$.

Denote $R=(H^{1/2}+I)^{-1}$ and suppose that for each $\eps>0$ there
exists $\del>0$ such that $|s|<\del$ implies $ \norm (U_s-I)R\norm
<\eps$. Clearly $(U_s-I)R_n=U_{c_n}(U_s-I)R U_{-c_n}$ so $\norm
(U_s-I)R_n\norm =\norm (U_s-I)R\norm$. Letting $n\to\infty$ in the
formula
\[
\norm (U_s-I)R_nf\norm <\eps \norm f\norm,
\]
valid for all $f\in L^2(\R^N)$, we obtain $\norm (U_s-I)f\norm \leq
\eps \norm f\norm$ under the same conditions on $s$. But $\norm
U_s-I\norm =2$ for all $s\not= 0$; the contradiction implies that
$R$ does not satisfy (\ref{Uproperty}).  Therefore $R\notin \cE$. An
application of the functional calculus finally implies that
$(H+I)^{-1}\notin\cE$, so $H$ is not affiliated with $\cE$.
\end{proof}

The following comments can be used to give an alternative proof of Theorem~\ref{main3-add-bis} and might be of value in other contexts.

\begin{lemma}\label{shade}
Let $\{c_n\}$ be a sequence (or a net) of points in $\R^N$ and $S$ a
bounded operator on $L^2(\R^N)$ such that the weak limit
$\lim_n U_{c_n}SU_{-c_n}=T$ exists. If $S\in\cD$ then $T\in\cD$.
If $S\in\cE$ then $T\in\cE$.
\end{lemma}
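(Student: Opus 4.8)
The plan is to exploit the covariance of the modulation and translation groups under conjugation by the translations $U_{c_n}$, and then transfer the resulting uniform norm estimates from the operators $S_n:=U_{c_n}SU_{-c_n}$ to their weak limit $T$ by means of the weak lower semicontinuity of the operator norm.

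First I would record the Weyl commutation relation $U_cV_kU_{-c}=\rme^{ik\cdot c}V_k$, which is a one-line computation from the definitions $V_kf(x)=\rme^{ik\cdot x}f(x)$ and $U_cf(x)=f(x+c)$, together with the fact that the translations commute, $U_sU_c=U_cU_s$. Conjugating $S_n$ and using $U_{-c_n}V_kU_{c_n}=\rme^{-ik\cdot c_n}V_k$ and $U_{-c_n}V_{-k}U_{c_n}=\rme^{ik\cdot c_n}V_{-k}$, the two phase factors cancel and I expect the identities
\[
V_kS_nV_{-k}=U_{c_n}\,(V_kSV_{-k})\,U_{-c_n},\qquad U_sS_n-S_n=U_{c_n}\,(U_sS-S)\,U_{-c_n},
\]
together with $S_n^*=U_{c_n}S^*U_{-c_n}$ (using $U_c^*=U_{-c}$) and the analogous formula for $U_sS_n^*-S_n^*$. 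Since each $U_{c_n}$ is unitary these give
\[
\norm V_kS_nV_{-k}-S_n\norm=\norm V_kSV_{-k}-S\norm,\qquad \norm U_sS_n-S_n\norm=\norm U_sS-S\norm,
\]
and likewise for the adjoint expression; crucially the right-hand sides do not depend on $n$.

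Next I would pass to the weak limit. Left and right multiplication by a fixed bounded operator is continuous for the weak operator topology, and so is the adjoint, so $S_n\to T$ weakly forces $V_kS_nV_{-k}\to V_kTV_{-k}$ and $S_n^*\to T^*$ weakly; hence the three differences above converge weakly to $V_kTV_{-k}-T$, $U_sT-T$ and $U_sT^*-T^*$ respectively. The operator norm is lower semicontinuous for the weak operator topology, since $\norm A\norm=\sup_{\norm x\norm,\norm y\norm\le 1}|\la Ax,y\ra|$ and $|\la Ax,y\ra|=\lim_n|\la A_nx,y\ra|\le\liminf_n\norm A_n\norm$. Applying this to each difference and using that the majorising norms are $n$-independent yields
\[
\norm V_kTV_{-k}-T\norm\le\norm V_kSV_{-k}-S\norm,
\]
and the companion bounds $\norm U_sT-T\norm\le\norm U_sS-S\norm$ and $\norm U_sT^*-T^*\norm\le\norm U_sS^*-S^*\norm$. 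Letting $k\to0$ gives $T\in\cD$ whenever $S\in\cD$; letting $s\to0$ in the remaining two inequalities and combining with $T\in\cD$ (which holds since $\cE\subset\cD$) gives $T\in\cE$ whenever $S\in\cE$.

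The only delicate point is the interplay of two topologies: the quantities one controls are norm differences, while the hypothesis on $T$ is merely weak convergence. The device that reconciles them is that conjugation by the unitaries $U_{c_n}$ makes the relevant norm differences for $S_n$ exactly equal to the $n$-independent norm differences for $S$, so that weak lower semicontinuity of the norm transfers the estimate to $T$ with no loss. The verification of the phase cancellation in the modulation identity is the main computational step, but it is routine once the Weyl relation is written down.
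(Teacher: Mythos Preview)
Your proof is correct and follows essentially the same approach as the paper: both arguments use the conjugation identities $V_kS_nV_{-k}=U_{c_n}(V_kSV_{-k})U_{-c_n}$ and $(U_s-I)S_n=U_{c_n}(U_s-I)SU_{-c_n}$ to make the relevant norm differences $n$-independent, and then invoke weak lower semicontinuity of the operator norm to transfer the estimates to $T$. Your write-up is slightly more explicit (you spell out the Weyl phase cancellation and treat the adjoint condition in \eqref{Uproperty} separately), but the underlying idea is identical.
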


\begin{proof}
If $S_x=U_xSU_{-x}$ then
$V_k S_x V_{-k}=U_xV_k S V_{-k}U_{-x}$
hence
$\norm V_k S_x V_{-k} -S_x \norm = \norm V_k S V_{-k} -S \norm$.
Thus if $f,g\in L^2$ are of norm one then by going to the limit
along $x=c_n$ in the inequality
$|\langle f, (V_k S_x V_{-k} -S_x) g \rangle|
\leq\norm V_k S V_{-k} -S \norm$
we obtain
$\norm V_k T V_{-k} - T \norm \leq \norm V_k S V_{-k} -S
\norm$. This clearly implies the first assertion of the lemma. The
supplementary argument needed for the second part is similar: from
the obvious $\norm (U_s-I) S_x \norm = \norm (U_s-I) S \norm $ we
get $\norm (U_s-I) T \norm \leq \norm (U_s-I) S \norm $ hence the
result.
\end{proof}

\begin{corollary}\label{c:shade}
Let $H$ be a self-adjoint operator affiliated with $\cD$ or $\cE$. Assume that there are a self-adjoint operator $\widetilde H$ and a sequence (or a net) of points $c_n\in\R^N$ such that $\lim_n U_{c_n} H U_{-c_n} = \widetilde H$ in the weak resolvent sense. Then $\widetilde H$ is affiliated with $\cD$ or $\cE$ respectively.
\end{corollary}

\section{Some examples}

In this section we study the theorems of this paper in one dimension, which is particularly simple because the Riemannian metric may be evaluated explicitly in that case.

We assume that $H$ acts in $L^2(\R,\rmd x)$ according to the formula
\[
(Hf)(x)=-\frac{\rmd}{\rmd x}\left( a(x)\frac{\rmd f}{\rmd x}\right)
\]
where $a:\R\to(0,\infty)$ is a continuously differentiable function.
More precisely $H$ is taken to be the Friedrichs extension of the
operator defined initially on $C_c^\infty(\R)$, and it is associated
with the closure of the quadratic form
\[
Q(f)=\int_\R a(x)|f^\pr(x)|^2\, \rmd x
\]
defined initially on $C_c^\infty(\R)$. We do not assume that
$a(\cdot)$ has a uniform positive upper or lower bound, so $H$ might
not be affiliated with either $\cD$ or $\cE$.

We now make the change of variable
\[
s(x)=\int_0^x a(u)^{-1/2}\, \rmd u
\]
so that $\alp<s<\bet$ where $-\infty\leq \alp <0<\bet\leq \infty$.
If we put $g(s)=f(x(s))$, then
\begin{eqnarray*}
\int_\R |f(x)|^2\, \rmd x &=& \int_\alp^\bet |g(s)|^2 \sig(s)^2\rmd s\\
Q(f)&=&\int_\alp^\bet |g^\pr(s)|^2 \sig(s)^2\rmd s
\end{eqnarray*}
where $\sig(s)=a(x)^{1/4}$.

The advantage of the representation (\ref{UformV}) below is that it
often enables one to use the well-developed theory of \Schrodinger
operators to determine the essential spectrum of $K$ and hence of
$H$.

\begin{theorem}\label{fr}
The operator $H$ lies in $\widetilde{\cD}$ where this algebra is the
norm closure of the operators that have finite range in the sense of
\cite{EBD2}, as measured by the Euclidean metric associated with the
variable $s$. If $\sig$ is twice continuously differentiable, then
$H$ is unitarily equivalent to the operator $K$ acting in
$L^2((\alp,\bet),\rmd s)$ according to the formula
\begin{equation}
(Kh)(s)=-h^{\pr\pr}(s)+V(s)h(s)\label{UformV}
\end{equation}
and subject to Dirichlet boundary conditions, where
$V(s)=\sig^{\pr\pr}(s)/\sig(s)$.
\end{theorem}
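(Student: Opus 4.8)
The plan is to treat the two assertions separately, since the first (affiliation with $\til\cD$) needs only the $C^1$ hypothesis on $a$, while the second (unitary equivalence to $K$) additionally uses $\sig\in C^2$. For the first assertion I would show that $(H+I)^{-1}$ lies in the norm closure of the finite range operators by exhibiting an explicit approximating sequence, exactly as in the Lemma following the proof of Theorem~\ref{main1}. The essential input is the heat kernel bound (\ref{heatbound}): its derivation via Lemma~1 of \cite{EBD1} uses only the Riemannian metric $\rmd s=a(x)^{-1/2}\rmd x$ and requires no upper or lower bound on $a$, so it remains valid here. In one dimension the Riemannian distance between two points is simply $|s(x)-s(y)|$, so (\ref{heatbound}) becomes $\norm P_E\rme^{-Ht}P_F\norm\leq\rme^{-d(E,F)^2/4t}$ with $d$ the Euclidean distance in the variable $s$. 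Partitioning $(\alp,\bet)$ into consecutive unit $s$-intervals with projections $P_n$ and writing $(H+I)^{-1}=\int_0^\infty\rme^{-t}\rme^{-Ht}\,\rmd t$, the operators $A_R=\sum_{|m-n|\leq R}P_m(H+I)^{-1}P_n$ have finite range $R$ in the $s$-metric, and the estimate $\norm P_m(H+I)^{-1}P_n\norm\leq\int_0^\infty\rme^{-t}\rme^{-(|m-n|-1)^2/4t}\,\rmd t$ shows, by the summability argument already used, that $\norm(H+I)^{-1}-A_R\norm\to 0$. Hence $H$ is affiliated with $\til\cD$.

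For the second assertion I would compose the change of variables $f\mapsto g$ recorded above, which is a unitary map onto $L^2((\alp,\bet),\sig^2\rmd s)$, with the further unitary $g\mapsto h:=\sig g$ onto $L^2((\alp,\bet),\rmd s)$. Substituting $g=h/\sig$ into the form gives $Q(f)=\int|g^\pr|^2\sig^2\,\rmd s=\int|h^\pr-(\sig^\pr/\sig)h|^2\,\rmd s$. Expanding the square and integrating the cross term by parts---legitimate for $h\in C_c^\infty$ once $\sig\in C^2$---the boundary contributions vanish, and the identity $(\sig^\pr/\sig)^\pr+(\sig^\pr/\sig)^2=\sig^{\pr\pr}/\sig$ reduces this to $\int|h^\pr|^2\,\rmd s+\int(\sig^{\pr\pr}/\sig)|h|^2\,\rmd s$, which is precisely the quadratic form of the operator $K$ in (\ref{UformV}) with $V=\sig^{\pr\pr}/\sig$. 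Since $H$ is the Friedrichs extension, its form domain is the closure of $C_c^\infty(\R)$ in the form norm, which the two unitaries carry onto the closure of $C_c^\infty((\alp,\bet))$ in $L^2((\alp,\bet),\rmd s)$; this is the Dirichlet form domain, giving the stated boundary conditions.

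The algebra in the second part is routine once $\sig\in C^2$, so the point that needs the most care is the bookkeeping of domains: verifying that the two unitaries map the Friedrichs form domain exactly onto the Dirichlet form domain of $K$, and that the integration by parts loses no boundary term when $\alp$ or $\bet$ is finite. In the first part the only subtlety is confirming that (\ref{heatbound}) is genuinely available without any bound on $a$, and that the summability estimate is unaffected at the finitely many boundary intervals when $(\alp,\bet)$ is bounded, where the distance estimate only improves.
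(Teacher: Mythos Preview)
Your proposal is correct and follows the same approach as the paper. The paper's own proof is extremely terse: for the first assertion it simply says the argument follows the line of Theorem~\ref{main1}, and for the second it cites \cite[Theorem~4.2.1]{HKST} with the unitary $(Uf)(s)=\sig(s)f(s)$. You have supplied precisely the details behind these references---the heat-kernel Gaussian bound in the Riemannian ($s$-)metric combined with a unit-interval partition for part one, and the explicit form computation with integration by parts for part two---and your observation that no two-sided bound on $a$ is needed once one works directly in the $s$-variable is exactly the point.
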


\begin{proof}
The first part of the theorem follows the same line of argument as
the first proof of Theorem~\ref{main1}. The second part is an
application of \cite{HKST}; one has to put $V=X$ in the proof of
Theorem~4.2.1. The unitary operator $U:L^2((\alp,\bet),\sig(s)^2\rmd
s)\to L^2((\alp,\bet),\rmd s)$ is defined by $(Uf)(s)=\sig(s)f(s)$.
\end{proof}

\begin{example}
The case $a(x)=\rme^{-2x}$ is particularly simple, because we may
then vary the above definition slightly by putting $s(x)=\rme^x$,
where $s$ lies in $(0,\infty)$. We have $\sig(s)=s^{-1/2}$. The new
metric is given by
\[
d(x_1,x_2)= \left| \rme^{x_1}-\rme^{x_2} \right|
\]
in the $x$ variable. The fact that $H$ is affiliated with
$\widetilde{\cD}$ may be interpreted as saying its resolvent has finite
range if unit balls are stretched for large negative $x$ and
compressed for large positive $x$.

If we put $h(s)=s^{1/2}f(s)$ then we obtain
\begin{eqnarray*}
\int_0^\infty |f(s)|^2 \sig(s)^2\, \rmd s&=& \int_0^\infty |h(s)|^2\, \rmd s\\
\int_0^\infty |f^\pr(s)|^2 \sig(s)^2\, \rmd s&=& \int_0^\infty
\left(|h^\pr(s)|^2+\frac{3}{4s^2}|h(s)|^2\right) \, \rmd s.
\end{eqnarray*}

In this representation the operator $H$ becomes
\begin{equation}
(Kh)(s)=-h^{\pr\pr}(s) + \frac{3}{4s^2}h(s)\label{Kdef}
\end{equation}
subject to Dirichlet boundary conditions at $0$ and $\infty$. The
positivity of the potential implies that the (non-negative) Green
function of $(K+I)^{-1}$ is pointwise bounded above by the Green
function of
\[
(K_0h)(s)=-h^{\pr\pr}(s)
\]
and similarly for the heat kernels. This provides an independent
check that $H$ is affiliated with $\widetilde{\cD}$. The formula
(\ref{Kdef}) also allows us to conclude that the spectrum and
essential spectrum of $H$ equal $[0,\infty)$.
\end{example}

\begin{example}
A similar exact calculation may be carried out in the Hilbert space
$L^2((0,\infty), \rmd x)$ for $a(x)=x^\alp$, where $0<\alp<2$.
Putting $\bet=1-\alp/2\in (0,1)$, the new variable $s=x^\bet/\bet$
ranges from $0$ to $\infty$. The corresponding metric is
\[d(x_1,x_2)= \bet^{-1}\left| x_1^\bet-x_2^\bet\right|
\]
which is much larger than the Euclidean metric near $0$ and much
smaller for large $x_1,\, x_2$.
\end{example}

\section{A more general context}\label{extras}

It is interesting to note that one can give descriptions of the
algebras $\cD$ and $\cE$ that are independent of the vector space
structure of $\R^N$. This allows one not only to replace $\R^N$ by a
metric space, \label{cr11}
but also to define certain $C^*$-subalgebras of $\cD$
with which non-uniformly elliptic operators are affiliated.

Let $X$ be a metrizable, locally compact, but non-compact space
equipped with a Radon measure $\mu$ whose support is equal to
$X$. This fixes the Hilbert space $L^2(X)$. Assume that $d$ is a
proper metric compatible with the topology on $X$ such that
$\sup_x\mu(B_x(r))<\infty$ holds for any closed ball $B_x(r)$
(proper means that any closed bounded set is compact). One says that
a bounded operator $A$ on $L^2(X)$ has \emph{$d$-finite range} if
there exists $r>0$ such that $P_EAP_F=0$ for all closed sets $E,F$
such that $d(E,F)>r$. If $X$ is a manifold this is equivalent to
assuming that the distribution kernel of $A$ has support in
$\{(x,y):d(x,y)\leq r\}$. We associate to $d$ two C*-algebras of
operators on $L^2(X)$ by the following rules \cite{EBD2,VG}:
$\cD(d)$ is the norm closure of the set of $d$-finite range
operators and $\cE(d)$ is the norm closure of the set of $d$-finite
range operators which have bounded $d$-uniformly continuous integral
kernels. Our next proposition shows that these definitions provide
natural generalizations of the algebras $\cD$ and $\cE$ considered
before.

\begin{proposition}\label{cd-metric}
If $X=\R^N$ and $d_e$ denotes the Euclidean metric on $X$ then $\cD=\cD(d_e)$ and $\cE=\cE(d_e)$.
\end{proposition}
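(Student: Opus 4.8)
The plan is to establish each of the two asserted identities by proving two inclusions, the ``closure'' inclusions $\cD(d_e)\subseteq\cD$ and $\cE(d_e)\subseteq\cE$ being routine and the ``density'' inclusions $\cD\subseteq\cD(d_e)$ and $\cE\subseteq\cE(d_e)$ carrying the real content. Since $\cD$ and $\cE$ are norm closed, for the closure inclusions it suffices to show that each generator lies in the target algebra. If $A$ has $d_e$-finite range $r$ then $P_mAP_n=0$ once the unit cubes $E_m,E_n$ satisfy $|m-n|>r+\sqrt N$, so $\norm P_mAP_n\norm\le\mu(m-n)$ with $\mu$ of finite support; the Lemma following the proof of Theorem~\ref{main1} then gives $A\in\cD$, whence $\cD(d_e)\subseteq\cD$. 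If in addition $A$ has a bounded, uniformly continuous kernel $K$, then $U_sA-A$ and $AU_s-A$ are integral operators whose kernels are dominated by the modulus of continuity of $K$ and supported in $\{|x-y|\le r+1\}$; a Schur-test estimate bounds their norms by $\omega(|s|)\,\mathrm{vol}(B_{r+1})\to0$, giving both conditions in \eqref{Uproperty} and hence $\cE(d_e)\subseteq\cE$.

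For $\cD\subseteq\cD(d_e)$ I would smooth $A$ by averaging its modulation conjugates. Fix $\chi\in C_c^\infty(\R^N)$ with $\chi(0)=1$ and $\supp\chi\subseteq\{|z|\le1\}$, and for $r>0$ put $\widetilde\phi_r(z)=\chi(z/r)$, letting $\phi_r$ be its Fourier transform; then $\phi_r$ is an approximate identity concentrating at $k=0$, with $\int\phi_r=1$ and $\int|\phi_r|\,\rmd^Nk$ bounded uniformly in $r$. Define $A_{\phi_r}=\int_{\R^N}\phi_r(k)\,V_kAV_{-k}\,\rmd^Nk$. By the Schwartz kernel theorem the distributional kernel of $A_{\phi_r}$ is $\widetilde\phi_r(x-y)K_A(x,y)$, supported in $\{|x-y|\le r\}$, so $A_{\phi_r}$ has finite range. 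Writing $A_{\phi_r}-A=\int\phi_r(k)(V_kAV_{-k}-A)\,\rmd^Nk$ and splitting the integral at $|k|\le\delta$ and $|k|>\delta$, the first piece is controlled by $\sup_{|k|\le\delta}\norm V_kAV_{-k}-A\norm$, which is small since $A\in\cD$, and the second by $2\norm A\norm\int_{|k|>\delta}|\phi_r|\,\rmd^Nk$, which is small for $r$ large; hence $A_{\phi_r}\to A$ in norm and $A\in\cD(d_e)$.

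For $\cE\subseteq\cE(d_e)$ I would combine the modulation averaging with a translation mollification. Choose an even $\rho_\epsilon\in C_c^\infty$ supported in $\{|s|\le\epsilon\}$ with $\int\rho_\epsilon=1$, set $m_\epsilon(P)=\int\rho_\epsilon(s)U_s\,\rmd^Ns$, and let $B_\epsilon=m_\epsilon(P)Am_\epsilon(P)$. Writing $m_\epsilon(P)-I=\int\rho_\epsilon(s)(U_s-I)\,\rmd^Ns$ and invoking the two conditions \eqref{Uproperty} for $A\in\cE$ shows $\norm B_\epsilon-A\norm\to0$; moreover $B_\epsilon$ has the bounded, uniformly continuous kernel $K_{B_\epsilon}(x,y)=\langle A\eta_y,\eta_x\rangle$ with $\eta_x=\rho_\epsilon(\cdot-x)$, continuity coming from the uniform $L^2$-continuity of translation. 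Since $m_\epsilon(P)$ is finite range, hence in $\cD$, and $\cD$ is an algebra, $B_\epsilon\in\cD$, so the modulation averaging of the previous paragraph applies: $(B_\epsilon)_{\phi_r}\to B_\epsilon$ in norm, and its kernel $\widetilde\phi_r(x-y)K_{B_\epsilon}(x,y)$ is finite range and still bounded and uniformly continuous. A diagonal choice of $\epsilon_n\to0$ and $r_n\to\infty$ then produces generators of $\cE(d_e)$ converging in norm to $A$.

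The main obstacle is the density direction, and specifically the simultaneous requirements in the $\cE$ case: the approximants must be finite range \emph{and} carry bounded uniformly continuous kernels. The modulation average supplies finite range but no kernel regularity, while the translation mollification supplies a smooth kernel but destroys finite range unless the mollifier is compactly supported; the construction succeeds only because a compactly supported $\rho_\epsilon$ yields an $m_\epsilon(P)$ that is at once smoothing and finite range. The other delicate point is that genuine \emph{norm} convergence $B_\epsilon\to A$ --- not merely strong convergence --- is exactly what the extra hypotheses \eqref{Uproperty} defining $\cE$ provide, which is why this argument works for $\cE$ but its analogue is unavailable for $\cD$.
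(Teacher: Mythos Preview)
Your argument is correct. The paper's own ``proof'' is not really a proof at all: it simply cites \cite{EBD2} and \cite[Propositions~6.5 and 7.4]{VG} for the two identities and remarks that the crossed-product description $\cE=\cC\rtimes G$ makes $\cE=\cE(d_e)$ ``obvious''. You, by contrast, supply a self-contained argument.

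For $\cD=\cD(d_e)$ your modulation-averaging construction $A\mapsto A_{\phi_r}=\int\phi_r(k)V_kAV_{-k}\,\rmd^Nk$, with $\widehat\phi_r$ compactly supported so that the distributional kernel becomes $\widehat\phi_r(x-y)K_A(x,y)$, is precisely the natural way to manufacture finite-range approximants out of condition~\eqref{Vproperty}; this is in the same spirit as the arguments in the cited references, though you have written it out rather than appealing to them. For $\cE=\cE(d_e)$ your route differs from the one the paper alludes to. The paper's remark is that once one knows $\cE=\cC\rtimes G$, a dense set of elements of the crossed product is visibly given by integral operators with kernels $\sum_j f_j(x)g_j(x-y)$ where $f_j\in\cC$ and $g_j\in C_c(\R^N)$, and these are finite-range with bounded uniformly continuous kernels; this is short but imports the crossed-product machinery. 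Your two-step construction---first sandwich by the compactly supported convolution $m_\epsilon(P)$ to obtain a bounded uniformly continuous kernel, then apply the modulation average to truncate to finite range while preserving that regularity---is more elementary and makes transparent exactly where each hypothesis in \eqref{Uproperty} is used (namely, to get \emph{norm} rather than strong convergence of $B_\epsilon\to A$). Both approaches are valid; yours trades a structural identification for an explicit approximation scheme.
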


\begin{proof}
The description of $\cD$ in terms of operators of finite range appears in \cite{EBD2}. The identity $\cD=\cD(d_e)$ is a particular case of \cite[Proposition~7.4]{VG}; the proof is particularly simple when $X=\R^N$. The identity $\cE=\cE(d_e)$ follows from \cite[Proposition 6.5]{VG}. Note that the identification $\cE=\cC\rtimes G$ makes this part obvious.
\end{proof}

Let $\cD$ and $\cE$ be the algebras associated with a triple
$(X,\mu,d)$ satisfying the preceding conditions. We say that a
bounded operator $A$ on $L^2(X)$ has bounded support if there exist
$a\in X$ and $r>0$ such that $A=AP_r=P_rA$, where $P_r$ is the
projection
\[
(P_rf)(x)=\left\{ \begin{array}{ll}
f(x)&\mbox{ if $d(x,a)< r$,}\\
0&\mbox{ otherwise.}
\end{array}\right.
\]
The choice of $a$ is irrelevant in this context. Denote the norm
closure of the set of operators with bounded support by $\cB$. It is
easy to show that every operator with bounded support lies in $\cD$,
hence $\cB\subset\cD$.  \label{cr9} It is also clear that if $X$ is
a discrete space then $\cB$ coincides with the set of compact
operators on $L^2(X)$ (note that the metric is proper).  The
following theorem correctly suggests that $\cB$ plays a similar role
in $\cD$ as the set of compact operators $\cK$ plays in $\cE$.

\begin{theorem}\label{bddideal}
The set $\cB$ is a proper, closed, two-sided ideal in $\cD$.
\end{theorem}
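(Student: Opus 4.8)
The plan is to verify the three requirements separately: that $\cB$ is a closed linear subspace of $\cD$, that it absorbs products with elements of $\cD$ on both sides, and that it is proper. Closedness is built into the definition, since $\cB$ is by construction a norm closure, so the only points to check for the subspace claim are that the bounded-support operators form a linear subspace and that they actually sit inside $\cD$. For linearity I would use that the metric is proper: if $A$ is supported in a ball $B_{a_1}(r_1)$ and $B$ in a ball $B_{a_2}(r_2)$, both balls lie inside a single ball $B_a(R)$ for a fixed centre $a$ and sufficiently large $R$, whence $A+B = P_R(A+B) = (A+B)P_R$. That bounded-support operators lie in $\cD$ is already noted in the text; concretely, if $A = P_s A P_s$ then $P_E A P_F = P_{E\cap B_a(s)}A P_{F\cap B_a(s)}$ vanishes whenever $d(E,F) > 2s$, so $A$ has $d$-finite range. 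As $\cD$ is norm closed, $\cB \subseteq \cD$.

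The ideal property is the heart of the argument. First I would prove the following localisation statement: if $A$ has bounded support in $B_a(r)$ and $D$ has $d$-finite range with parameter $\rho$, then $DA$ and $AD$ both have bounded support in $B_a(r')$ for any $r' > r+\rho$. The mechanism is pure support-tracking. Writing $A = P_r A = A P_r$ and choosing the closed set $E = X\setminus B_a(r')$, the triangle inequality gives $d(E,\overline{B_a(r)}) \ge r'-r > \rho$, so $P_E D P_r = 0$; equivalently $D P_r = P_{r'} D P_r$. Combining this with $A = P_r A$ yields
\[
DA = D P_r A = P_{r'} D P_r A = P_{r'} DA,
\]
while $DA = (DA)P_r = (DA)P_{r'}$ is immediate from $A = A P_r$; hence $DA$ has bounded support in $B_a(r')$. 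The computation for $AD$ is symmetric (using $P_r D = P_r D P_{r'}$), or one may take adjoints, noting that the $d$-finite range operators are stable under the $*$-operation. Once this is established, the full ideal property follows by continuity of multiplication together with the closedness of $\cB$: for fixed finite-range $D$ and $A \in \cB$, approximate $A$ in norm by bounded-support operators $A_n$; each $DA_n \in \cB$ and $DA_n \to DA$, so $DA \in \cB$. A second approximation, now in $D$ by finite-range operators, upgrades this to arbitrary $D \in \cD$, and the same scheme handles $AD$.

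For properness it suffices to exhibit an element of $\cD$ lying outside $\cB$, and the identity operator $I$ is the natural candidate: it has $d$-finite range with parameter $0$, so $I \in \cD$. To see that $I \notin \cB$ I would record that every operator in $\cB$ vanishes at infinity in the sense that, writing $Q_R = I - P_R$, one has $\norm Q_R A\norm \to 0$ as $R \to \infty$. This holds exactly for bounded-support operators (indeed $Q_R A = 0$ once $R$ exceeds the support radius) and passes to the closure by an $\eps/2$ argument. If $I$ were in $\cB$ this would force $\norm Q_R\norm \to 0$; but $X$ is non-compact and $\mu$ has full support, so $X\setminus \overline{B_a(R)}$ is a non-empty open set of positive measure for every $R$, making $Q_R$ a non-zero projection with $\norm Q_R\norm = 1$. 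This contradiction shows $\cB \neq \cD$.

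The main obstacle is the localisation statement in the ideal step, since it is the only place where the interplay between ``bounded support'' and ``finite range'' is genuinely exploited, and the bookkeeping with the two radii $r$ and $\rho$ (together with the strict-versus-non-strict inequalities hidden in the open/closed ball definitions, which is why I pass to $\overline{B_a(r)}$ when applying the finite-range hypothesis) must be arranged so that a single enlarged radius $r'$ works on both sides. Everything else — linearity, the density and continuity passage, and the properness argument — is routine once that lemma is in hand.
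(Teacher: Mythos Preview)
Your proof is correct and follows essentially the same approach as the paper's: both establish the ideal property by showing that a bounded-support operator multiplied by a finite-range operator again has bounded support (with enlarged radius), and then pass to norm closures. Your properness argument via $\norm Q_R A\norm \to 0$ is a mild repackaging of the paper's direct estimate $\norm A - I\norm \geq 1$ for bounded-support $A$, but the underlying idea---that non-compactness of $X$ supplies unit vectors outside any ball---is identical.
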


\begin{proof}
This is a special case of \cite[Theorem~18]{EBD2}, in which one takes the set $S$ there to be any non-empty bounded open set in $\R^N$.
\end{proof}

In the light of this theorem, one may define the spectrum of $A\in\cD$ at infinity to be the spectrum of $\pi(A)$, where $\pi:\cD\to\cD/\cB$ is the canonical quotient map. One might even call $A$ a Fredholm operator relative to $\cD$ if $\pi(A)$ is invertible in $\cD/\cB$.

If $X=\R^N$ then Theorems~\ref{main1} and \ref{main2} show that if
one wishes to study uniformly elliptic operators then $d_e$ is the
appropriate choice of the metric. However, Theorem~\ref{fr} shows
that for some second order elliptic operators that are not uniformly
elliptic the relevant metric can be expressed in terms of the second
order coefficients.

The class of operators affiliated to $\cD(d)$ is very large. The main interest of the algebras $\cE(d)$ is that the essential
spectrum of the operators affiliated with them can be described in
terms of asymptotic operators, but it is not so easy to check
that interesting operators are affiliated with $\cE(d)$.  For this
reason, we shall describe a class of Riemannian manifolds for
which the Laplacian is affiliated with the corresponding algebra
$\cE(d)$. The following material extends the scope of the study of the Laplacian on three-dimensional hyperbolic space in \cite[Theorem~44]{EBD2}; in that example the volume doubling condition below does not hold.

Let $X$ be a complete but non-compact Riemannian manifold equipped with its canonical Riemannian measure $\mu$ and distance $d$. Denote
$V_x(r)=\mu(B_x(r))$. Let $H$ be the self-adjoint operator
associated to the (positive) Laplacian $\Delta$ and let $h_t(x,y)$
be the kernel of $\rme^{-tH}$. We assume:

\begin{itemize}

\item the measure has the volume doubling property, i.e. there is a
  constant $D$ such that $V_x(2r)\leq DV_x(r)$ for all $x\in X$ and
  $r>0$;

\item the Poincar\'e inequality holds: there is a constant
  $P$ such that
\[
\int_{B_x(r)} | f-f_{B(r)} |^2 d\mu \leq Pr^2
\int_{B_x(2r)} | \nabla f |^2 d\mu
\]
for all $x\in X$ and $r>0$, where
$f_{B(r)}(x)=V_x(r)^{-1}\int_{B_x(r)}f d \mu$;

\item we have $\sup_{x,y} h_t(x,y)<\infty$ for all $t>0$.

\end{itemize}

\begin{theorem}\label{manifold}
Under the above assumptions on $X$ the operator $H$ is affiliated with $\cE(d)$.
\end{theorem}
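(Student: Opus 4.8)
The plan is to reduce everything to the heat semigroup and then exploit Gaussian heat kernel estimates. Since $\cE(d)$ is norm closed and $(H+I)^{-1}=\int_0^\infty \rme^{-t(H+I)}\,\rmd t$ converges in $\cL(\cH)$ (the integrand has norm at most $\rme^{-t}$, because $H\geq 0$), it suffices to prove that $\rme^{-tH}\in\cE(d)$ for every $t>0$: the integral is then a norm limit of Riemann sums lying in the closed subspace $\cE(d)$, and the contribution of a neighbourhood $(0,\eps)$ of the origin is controlled by $\int_0^\eps \rme^{-t}\,\rmd t\to 0$.

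The analytic input is the theorem of Grigor'yan and Saloff-Coste: for a complete manifold the conjunction of volume doubling and the Poincar\'e inequality is equivalent to the parabolic Harnack inequality, and hence yields the Gaussian upper bound
\[
h_t(x,y)\leq \frac{C_t}{V_x(\sqrt t)}\,\exp\!\left(-\frac{d(x,y)^2}{C_t\,t}\right)
\]
together with a uniform H\"older modulus of continuity for $(x,y)\mapsto h_t(x,y)$. The third hypothesis $\sup_{x,y}h_t(x,y)<\infty$ is exactly what is needed to rule out degeneration of $V_x(\sqrt t)$, so that $h_t$ is a genuinely bounded kernel; combined with the uniform Harnack-type H\"older estimate, it shows that $h_t(\cdot,\cdot)$ is bounded and $d$-uniformly continuous.

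Given this, I would approximate $\rme^{-tH}$ by finite-range truncations. Fix a Lipschitz cutoff $\chi_r:[0,\infty)\to[0,1]$ equal to $1$ on $[0,r]$ and to $0$ on $[r+1,\infty)$, and let $A_r$ be the operator with kernel $\chi_r(d(x,y))\,h_t(x,y)$. Since $(x,y)\mapsto d(x,y)$ is $1$-Lipschitz in each variable and $\chi_r$ is Lipschitz, $\chi_r\circ d$ is bounded and $d$-uniformly continuous; hence so is its product with $h_t$, and $A_r$ is $d$-finite range, so $A_r\in\cE(d)$. The difference $\rme^{-tH}-A_r$ has kernel supported in $\{d(x,y)\geq r\}$, and by the Schur test (using $h_t(x,y)=h_t(y,x)$) its operator norm is at most
\[
\sup_{x}\int_{d(x,y)\geq r} h_t(x,y)\,\rmd\mu(y).
\]
Splitting this domain into dyadic annuli $\{2^k r\leq d(x,y)<2^{k+1}r\}$, bounding their measure by $V_x(2^{k+1}r)$ and using volume doubling to compare $V_x(2^{k+1}r)$ with $V_x(\sqrt t)$ (which produces only polynomial growth in $2^k$), the Gaussian factor forces the sum to tend to $0$ uniformly in $x$ as $r\to\infty$. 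Hence $A_r\to \rme^{-tH}$ in norm and $\rme^{-tH}\in\cE(d)$, completing the argument.

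The main obstacle is the second step: extracting the Gaussian upper bound and the uniform continuity of the kernel from the geometric hypotheses, which is where the deep equivalence of Grigor'yan and Saloff-Coste is invoked and where the role of each of the three assumptions becomes visible --- volume doubling and the Poincar\'e inequality produce the off-diagonal Gaussian decay and the H\"older control, while the boundedness of the heat kernel secures uniform boundedness of the truncated kernels. Everything afterwards, namely the cutoff construction and the Schur-test tail estimate, is a routine if slightly technical exercise.
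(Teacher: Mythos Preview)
Your proposal is correct and follows essentially the same route as the paper: invoke the Saloff-Coste/Grigor'yan theory to obtain the Gaussian upper bound and H\"older regularity of $h_t$, use the third hypothesis to secure boundedness of the kernel, truncate by a cutoff $\chi_r(d(x,y))$ to get a $d$-finite-range operator with bounded $d$-uniformly continuous kernel, and control the remainder by the Schur test. The only cosmetic differences are that the paper works at the single time $t=1$ (passing to the resolvent via the continuous functional calculus inside $\cE(d)$ rather than by integrating $\rme^{-t(H+I)}$), and that it cites a lemma of Saloff-Coste for the tail integral where you sketch the equivalent dyadic-annulus argument.
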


\begin{proof}
By using the first two conditions and \cite[Theorem 5.4.12]{SC}
we see that there are constants $C,a>0$ such that
\[
h_t(x,y) \leq
C V_x(\sqrt{t})^{-1}\rme^{-a d(x,y)^2/t},
\]
In particular, the third condition is satisfied if $\inf_x
V_x(\sqrt{t})>0$. Moreover, from \cite[Theorem 5.4.8]{SC} we get for
all $x,y,z,t$ such that $d(y,z)\leq \sqrt{t}$
\begin{equation}\label{eq:hold}
|h_t(x,y)-h_t(x,z)| \leq C t^{-\alpha/2} d(y,z)^{\alpha} h_{2t}(x,y).
\end{equation}
Here $C,\alpha$ are some strictly positive constants. By using also
the third condition we introduced above, we see that the integral
kernel $h=h_1$ of $\rme^{-H}$ is a bounded symmetric H\"older
continuous function, namely there is a number $C$ such that:
\begin{equation}\label{eq:holder}
|h(x,y)-h(x,z)| \leq C d(y,z)^{\alpha} \quad \mathrm{ if } d(y,z)\leq 1.
\end{equation}

We need one more simple argument to complete the proof that $\rme^{-H}\in\cE$. Let $t>0$ and let $\theta:\R\to\R$ be a continuous function such that $0\leq\theta\leq1$, $\theta(t)=1$ if $t\leq r$, and $\theta(t)=0$ if $t\geq r+1$. We set $k(x,y)=h(x,y)\theta(d(x,y))$. Clearly $k$ is a $d$-uniformly continuous $d$-finite range kernel and
\begin{eqnarray*}
\int_{X}|h(x,y)-k(x,y)|\mu(dy)
&\leq& \int_{d(x,y)\geq r} h(x,y)\mu(dy)\\
&\leq& \int_{d(x,y)\geq r} C V_x(1)^{-1}\rme^{-ad(x,y)^2}\mu(dy).
\end{eqnarray*}
Denote $t=1/a$ and observe that the doubling property implies
$V_{x}(\sqrt{t})\leq C(t)V_{x}(1)$. Therefore
\[
\int_{X}|h(x,y)-k(x,y)|\mu(dy)\leq \int_{d(x,y)\geq r}CC(t)^{-1}V_{x}(\sqrt{t})^{-1}\rme^{-d(x,y)^2/t}\mu(dy).
\]
Then from
\cite[Lemma 5.2.13]{SC} we obtain
\[
\int_{X}|h(x,y)-k(x,y)|\mu(dy)\leq K \rme^{-ar^{2}/2}
\]
for some constant $K$ independent of $\theta$ and $r$. From the Schur
lemma it follows that $\|\rme^{-H}-Op(k)\|\leq K \rme^{-ar^{2}/2}$, where
$Op(k)$ is the operator on $L^{2}$ with kernel $k$. Since $Op(k)\in\cE(d)$
and $r>0$ is arbitrary, we get $\rme^{-H}\in\cE(d)$.
\end{proof}

\textbf{Acknowledgements} We thank El Maati Ouhabaz for helpful discussions of the material relating to Theorem~\ref{manifold}.

\end{document}